\DeclareMathOperator*{\argmin}{arg\,min}
\renewcommand{\c}[1]{\ensuremath{\mathcal{#1}}} %
\newcommand{\Ex}[1]{\ensuremath{\mathbb{E}\left[#1\right]}}  %
\newcommand{\zpn}{zero-preserving noisy~}
\newcommand{\ac}{anti-concenterated~}
\newcommand{\acs}{AC}
\newcommand{\calD}{{\mathcal{D}}}
\newcommand{\calO}{{\mathcal{O}}}
\newcommand{\calX}{{\mathcal{X}}}
\newcommand{\calC}{{\mathcal{C}}}
\newcommand{\calF}{{\mathcal{F}}}
\newcommand{\calR}{{\mathcal{R}}}
\newcommand{\calN}{{\mathcal{N}}}
\newcommand{\calE}{{\mathcal{E}}}
\newcommand{\calL}{\mathcal{L}}
\newcommand{\calA}{\mathcal{A}}
\newcommand{\Ball}{\mathbb{B}}
\newcommand{\Real}{{\mathbb{R}}}
\newcommand{\twonorm}[1]{\left\| #1\right\|_2}
\newcommand{\paren}[1]{\left( #1\right)}
\newcommand{\abs}[1]{\left| #1\right|}
\newcommand{\inner}[2]{\left\langle #1, #2\right\rangle}
\newcommand{\gradest}{{\widehat{\nabla}}}
\newcommand{\chisq}{{\chi^2}}
\newcommand{\define}{\ensuremath{:=}}
\newtheorem{theorem}{Theorem}[section]
\newtheorem{example}[theorem]{Example}
\newtheorem{lemma}[theorem]{Lemma}
\newtheorem{definition}[theorem]{Definition}
\renewcommand{\c}[1]{\ensuremath{\mathcal{#1}}} %
\newcommand{\bb}[1]{\ensuremath{\mathbb{#1}}} %
\title{On the Inherent Privacy of Zeroth Order Projected Gradient Descent}
\date{} 					%
\author{{\hspace{1mm}Devansh Gupta} \\
	Department of Computer Science\\
	University of Southern California\\
	Los Angeles, CA 90007\\
	\texttt{guptadev@usc.edu} \\
        \And
	{\hspace{1mm}Meisam Razaviyayn} \\
	Departments of ISE, CS, ECE, and QCB\\
	University of Southern California\\
	Los Angeles, CA 90007\\
	\texttt{razaviya@usc.edu} \\
    \And
	{\hspace{1mm}Vatsal Sharan} \\
	Department of Computer Science\\
	University of Southern California\\
	Los Angeles, CA 90007\\
	\texttt{vsharan@usc.edu} \\
}
\begin{document}
\maketitle

\begin{abstract}
  Differentially private zeroth-order optimization methods have recently gained popularity in private fine tuning of machine learning models due to their reduced memory requirements. Current approaches for privatizing zeroth-order methods rely on adding Gaussian noise to the estimated zeroth-order gradients. However, since the search direction in the zeroth-order methods is inherently random, researchers including \citet{tang2024privatefinetuninglargelanguage} and \citet{zhang2024dpzero} have raised an important question: is the inherent noise in zeroth-order estimators sufficient to ensure the overall differential privacy of the algorithm? This work settles this question  for a class of oracle-based optimization algorithms where the oracle returns zeroth-order gradient estimates. In particular, we show that for a fixed initialization, there exist strongly convex objective functions such that running (Projected) Zeroth-Order Gradient Descent (ZO-GD) is not differentially private. 
Furthermore, we show that even with random initialization and without revealing (initial and) intermediate iterates, the privacy loss in ZO-GD can grow superlinearly with the number of iterations when minimizing convex objective functions.
  
\end{abstract}

\section{Introduction}
\label{sec:intro}

The fine-tuning of pretrained large language models (LLMs) has demonstrated state-of-the-art performance across a range of downstream applications. However, two main challenges hinder the wide adoption of these models: 
the substantial memory requirements of gradient-based optimizers used for fine-tuning and the critical need to protect the privacy of domain-specific fine-tuning data. As fine-tuning LLMs grows increasingly memory-intensive, a range of strategies has emerged to address this issue. In particular, zeroth-order (ZO) optimization methods recently have gained traction due to their memory efficiency, as they do not require explicit gradient computations. Instead, the zeroth-order gradients can be computed using  forward step only, significantly reducing memory use compared to gradient computation.

In a pioneering approach, \citet{malladi2023mezo} introduced a memory-efficient technique for fine-tuning LLMs using two-point Simultaneous Perturbation Stochastic Approximation (SPSA) estimators \citep{spsa}, enabling large model fine-tuning on memory-limited devices.
Since then, zeroth-order methods have gained  popularity in dealing with large machine learning models due to their memory efficiency and favorable upper bounds on gap from optimality under certain conditions on the Hessian of the objective function \citep{zhang2024revisitingzerothorderoptimizationmemoryefficient, zhang2024dpzero, guo2024zerothorderfinetuningllmsextreme}.

Another major concern in training LLMs is \textit{privacy}. As large parameterized models are increasingly used in sensitive data applications, these models must protect sensitive information, especially given privacy regulations like the E.U. General Data Protection Regulation and the California Consumer Privacy Act. This requirement led to significant research into differential privacy (DP), a robust framework ensuring that machine learning models do not compromise the privacy of their contributors \citep{DworkRothblumVadhan2010}. As a result, there has been a growing focus on developing methods that fine-tune LLMs while adhering to differential privacy standards, leading to numerous theoretical advancements in private optimization \citep{boosting, pmlr-v19-chaudhuri11a, thresholdfunctions, NEURIPS2019_700fdb2b, block2024oracleefficientdifferentiallyprivatelearning, JMLR:v12:chaudhuri11a, BassilySmith2014, pmlr-v235-lowy24b, aroraprivacy, optlinear, bassilyoptimal, pmlr-v178-gopi22a, altschuler2022privacy, resque, generalnorms} and practical applications in the industry \citep{abadipaper, opacus, 10.1145/2660267.2660348, NIPS2017_253614bb, Rogers_Subramaniam_Peng_Durfee_Lee_Kancha_Sahay_Ahammad_2021}. Nevertheless, most existing work in this area has focused on first-order optimization/training algorithms, highlighting the need to explore differentially private zeroth-order optimization techniques to combine memory efficiency with privacy protection.

Motivated by the memory efficiency and empirical success of ZO methods in fine-tuning LLMs, \citet{tang2024privatefinetuninglargelanguage} and \citet{zhang2024dpzero} introduced differentially private and memory-efficient algorithms based on ZO optimization techniques. Both noted that the inherent noise in zeroth-order gradient estimates might contribute to privacy protection. As a result, they highlighted that the inherent noise in the ZO estimators was not considered in their privacy analyses and posed the following open question:

\vspace{0.2cm}

\fbox{
    \begin{minipage}{\columnwidth - 14pt}
    \textit{\textbf{Open Problem:} Is {additive (Gaussian) noise necessary for ensuring privacy for ZO Stochastic Gradient Descent (SGD)?}  }
    \end{minipage}
}

\vspace{0.2cm}

In this work, we address this question through the following key \textbf{contributions}:

\begin{enumerate}
    \item We propose a class of oracles that generalizes multiple point zeroth-order estimators and show that any estimator in our class is not differentially private.
    \item We show that for a generalized setting which subsumes the algorithms proposed in \citet{tang2024privatefinetuninglargelanguage} and \citet{zhang2024dpzero}, the ZO method is not private on its own and the presence of additive noise is necessary to preserve privacy of the algorithm. This answers the open problem posed by \cite{tang2024privatefinetuninglargelanguage} and \citet{zhang2024dpzero}. 
    \item We further show that, even with random initialization and without disclosing intermediate iterates, optimizing specific types of objectives using ZO-GD results in a superlinear increase in privacy loss as the number of iterations grows.  This finding suggests that, despite random initialization and privacy amplification through iterations, the inherent randomness of zeroth-order methods is insufficient to guarantee meaningful privacy in practice.
\end{enumerate}

\subsection{Related Work and Existing Results}
\paragraph{ZO Optimization.} The idea of minimizing functions based on function evaluations has origins from control theory \citep{spsa}. Such an approach is useful when obtaining gradients is either impossible or too costly, making ZO methods favorable for minimizing non-smooth or even discontinuous functions. For example, \citet{NIPS2012_e555ebe0, Nesterov2015RandomGM} gave upper bounds on the gap between the optimal solutions and returned solutions after a finite number of iterations in the case of convex functions. They relied on gradient oracles based on the finite difference method and show that these oracles estimate the gradient of the smoothed version of the function, as we discuss in Section \ref{sec:prelim}. To understand the fundamental limit on the performance of such algorithms, \citet{pmlr-v28-shamir13} showed the existance of convex functions with a lower bound on the optimality gap for any ZO algorithm that queries a single point per iteration. \citet{duchioptzero} extended this result to algorithms that query multiple points per iteration. Recently \citet{malladi2023mezo, pmlr-v195-yue23b} provided nearly-dimension-independent upper bounds on the optimality gap for ZO algorithms, primarily depending on a measure termed as the effective dimension of the problem which relates to some notion of a local rank of the hessian of the function. However, in the worst case, this effective dimension is equal to the actual dimension of the problem, restoring the lower bounds obtained for their specific cases in \citet{pmlr-v28-shamir13} and \citet{duchioptzero}. However, from an application perspective, the primary reason ZO methods have gained traction is that many over-parameterized models, such as pretrained Large Language Models (LLMs), are shown to have a low effective dimension \citep{hu2022lora}. \citet{malladi2023mezo, zhang2024dpzero} leverage this insight to justify the effectiveness of ZO algorithms in fine-tuning LLMs.

\paragraph{Private Convex Optimization.} On the other hand, optimization is the foundation of modern large-scale machine learning, making it essential to understand private optimization to fully grasp privacy in machine learning. Along this step, \citet{BassilySmith2014} gave the first tight upper and lower bounds for minimizing convex and strongly convex functions. Their idea for minimizing smooth functions was making Stochastic Gradient Descent (SGD) differentially private by updating their parameters with a noisy version of the gradient estimate. This algorithm achieves optimal excess risk gap (upto logarithmic terms) in private empirical risk minimization (ERM) \citep{BassilySmith2014} and stochastic convex optimization (SCO) for smooth objectives \citep{bassilyoptimal}. Moreover, a small (yet effective) modification to this algorithm  achieves optimal rates in just one pass over the entire data \citep{optlinear}. There have been further works which have also solved the problem of differentially private convex optimization under non-smooth conditions \citep{bassilyoptimal, NEURIPS2021_211c1e0b, optlinear} and general norms \citep{pmlr-v139-asi21b, generalnorms}. Other than private SGD, \citet{BassilySmith2014} proposed an exponential-mechanism-based algorithm which achieved optimal risk bounds up to constant factors for $\varepsilon$-DP minimization. \citet{pmlr-v178-gopi22a} further extended the exponential mechanism to $(\varepsilon, \delta)$ DP SCO and obtained upper bounds on the excess population loss. Recently, \citet{resque} proposed a new estimator specifically made for accelerated private optimization giving improvements on the query complexity of private SCO.

\textbf{Private ZO optimization} is an area which is relatively new and there have been recent works which give guarantees on private ZO optimization for smooth convex problems~\citep{malladi2023mezo}, smooth non-convex problems~\citep{zhang2024dpzero, tang2024privatefinetuninglargelanguage} and non-smooth non-convex problems \citep{zhang2024private}. 

On the other hand, the work on inherent privacy in zeroth-order optimization is relatively sparse. \citet{tang2024privatefinetuninglargelanguage} provided an \textit{empirical} evaluation of the privacy of Projected ZO-SGD using a privacy accounting technique based on membership-inference attacks on ML models \citep{shokri2017membership}. They empirically showed that ZO SGD without any additive noise gave almost the same privacy as regular SGD with additive noise corresponding to $\varepsilon = 10$ and $\delta = 10^{-5}$, giving some \textit{experimental evidence} to conjecture the possible presence of the inherent privacy in ZO optimization. However, we prove that there exist worst case objectives and datapoints where popular ZO optimization algorithms (including the algorithm studied in \citet{tang2024privatefinetuninglargelanguage}) are not private.

\section{Problem Setting and Preliminaries}
\label{sec:prelim}
\paragraph{Notation.} We use $\twonorm{\cdot}$ for the Euclidean $L_2$ norm. We denote $\bb{P}[E]$ as the probability of any event $E$. $\bb{E}[{X}]$ denotes the expectation of any random variable ${X}$. For any distribution $\calD$, $supp(\calD)$ represents the support of the distribution, $\calN(0, \sigma^2I_d)$ is the isotropic normal distribution with mean 0 and covariance $\sigma^2 I_d$. For any set S, $Unif(S)$ is the uniform distribution over the set $S$. $D\bb{B}^d = \{x \in \Real^d: \twonorm{x} \leq D\}$. $\mathbf{0}_d$, $\mathbf{1}_d$, and $\mathbf{e}_i$ represent $d$ dimensional all zero, all one, and the $i^{th}$ standard basis vectors, respectively. For a vector $v \in \Real^d$, $\{v\}_i$ represents the $i^{th}$ coordinate of the vector; $[n]$ denotes the set of natural numbers less than or equal to $n$. $\Pi_D(x) = \argmin_{u \in D}\twonorm{x-u}$ is used to denote the projection of $x \in \Real^d$ onto the set $D$. For any function $f$, $dom(f)$ represents the domain of the function $f$.

\paragraph{Problem Setting.} We consider the problem of minimizing the empirical loss function with respect to the dataset $\calD = \{d_1, d_2, ..., d_n\}$ given a closed convex set $\calC \subseteq \Real^d$
\begin{align*}
    \min_{w \in \calC}~\calL(w; \calD)
\end{align*}
where $\calL(w;\cdot): \calC \rightarrow \Real$ is convex and $L$-Lipschitz. There can be additional assumptions on $\calL(w;\cdot)$ such as $\Delta$ strong convexity. Recall that a function $g$ is called $L$-Lipschitz if $\twonorm{g(x) - g(y)} \leq L\twonorm{x-y}$ for all $x,y \in \calC$ and it is called $\mu$-Strongly Convex when for all $x, y \in \calC$, $f(x) \geq f(y) + \inner{z}{y - x} + \frac{\mu}{2}\twonorm{y-x}^2$ where $z$ is any subgradient of $f$. Next, we define the notion of Differential Privacy.

\begin{definition}[Differential Privacy \citep{originalDP}]
    \label{def:dp}
    Two datasets of the same size are neighbouring if $|\calD \Delta \calD'| = 2$ where $\Delta$ represents the symmetric set difference. Let $\varepsilon \geq 0, ~\delta \in [0, 1).$ A randomized algorithm $\calA$ is $(\varepsilon, \delta)$-differentially private (DP) if for all pairs of neighbouring data sets $\calD, \calD'$, we have
    \begin{equation}
    \label{eq: DP}
    \bb{P}(\calA(\calD) \in O) \leq e^\varepsilon \bb{P}(\calA(\calD') \in O) + \delta, 
    \end{equation}
for any measurable set~$O$. 
\end{definition}

\paragraph{ZO Stochastic Gradient Descent.} The algorithmic framework that we will be operating under is the \textit{projected stochastic ZO descent} as given in Algorithm \ref{alg:proj_zo_sgd}. Algorithm \ref{alg:proj_zo_sgd} requires access to \textit{zeroth order oracles} to query the update direction at each step. We define such oracles below. 
\begin{definition}
    \label{def:zeroorderoracle}
    A zeroth order oracle $\calO$ is an oracle which takes a function $f: \Real^d \rightarrow \Real$, a single point $w \in \Real^d$ and returns a probability measure over a subset of $\Real^d$ using only function evaluations on different points depending on $w$.
\end{definition}

Notably, ZO oracle may require multiple function evaluations, which may not necessarily depend on the point $w$. In the design of ZO algorithms, these oracles are typically implemented using well-established ZO estimators. Below, we provide examples of such estimators.

\begin{example}
    \label{ex:elemestimators}
    Here we list three popular ZO estimators: Simultaneous Perturbation Stochastic Approximation (SPSA) introduced by \citet{spsa}, Finite Difference (FD) introduced by \citet{Nesterov2015RandomGM}, and Single Point (SP) estimator introduced by \citet{singlepoint}.
    \begin{enumerate}
        \item $SPSA_{\xi}(f, w)$: Sample ${Z} \sim \calN(0, I_d)$ and return 
        $\widehat{\nabla}_1 f_\xi(w) \define \frac{f(w + \xi {Z}) - f(w - \xi {Z})}{2\xi}{Z}$.
        \item $FD_{\xi}(f, w)$: Sample ${Z} \sim \calN(0, I_d)$ and return $\widehat{\nabla}_2 f_\xi(w) \define \frac{f(w + \xi {Z}) - f(w)}{\xi}{Z}$.
        \item $SP_\xi(f, w)$: Sample ${Z} \sim \calN(0, I_d)$ and return $\widehat{\nabla}_3 f_\xi(w) \define \frac{d}{\xi}f\left(w + \xi {Z}\right){Z}$
    \end{enumerate}
\end{example}

 The above mentioned estimators are widely used in various zeroth order optimization and control algorithms. They are unbiased estimators of the gradient of a smoothed version of the function $f$, defined as $f_\xi(x) = \bb{E}_{{Z} \sim \calN(0, I_d)}\left[f(x + \xi{Z})\right]$. In other words, $\bb{E}\left[\widehat{\nabla} f_\xi(w)\right] = \nabla f_\xi(x)$ \citep{Nesterov2015RandomGM, nemrosyud}. 

Since the aforementioned estimators are randomized, several works, such as \citet{zhang2024private}, \citet{malladi2023mezo}, and \citet{duchioptzero}, have focused on strategies to reduce the variance of the updates obtained through these estimators, by taking the mean of different samples. Building on these randomized estimators, multi-point estimators can be defined aggregate information across multiple points. We provide an example of such aggregation below.

\begin{example}
    \label{ex:mean}
    For any randomized estimator $\calE$, the mean extension of the estimator $M^\calE_m(f, w)$ works as follows: Sample i.i.d. $U_1, U_2, ..., U_m \sim \calE(f, w)$ and return $\frac{1}{m} \sum_{i=1}^m U_i$.
\end{example}

\begin{algorithm}[t]
\caption{Projected Stochastic ZO Descent}
\label{alg:proj_zo_sgd}
    Given number of steps $T$, initialization distribution $\calR_{init}$, ZO oracle $\calO$, and constraint set $\calD$ \\
    Sample $w_0 \sim \calR_{init}$ \\
    \For{$t \gets 1, ..., T$}{
        Draw $\gradest \calL(w_{t-1}; \calX)$ from the distribution $\calO(\calL(\cdot; \calX), w_{t-1})$\\
        $w_{t} \gets \Pi_\calD\paren{w_{t-1} - \eta \gradest \calL(w_{t-1}; \calX)}$\\
    }
\end{algorithm}
 
\begin{example}
    Using the construction mentioned in Example~\ref{ex:mean}, one can further define $M_m^{SPSA}$ \citep{malladi2023mezo, zhang2024private}, $M_m^{FD}$ \citep{duchioptzero}, and $M_m^{SP}$.
    \begin{enumerate}
        \item $M_m^{SPSA_\xi}(f, w)$: Sample i.i.d. $Z_1, ..., Z_m \sim \calN(0, I_d)$ and return 
        $\widehat{\nabla}^m_1 f_\xi(w) \define \frac{1}{m}\sum_{i=1}^m \frac{f(w + \xi {Z_i}) - f(w - \xi {Z_i})}{2\xi}{Z_i}$.
        \item $M_m^{FD_\xi}(f, w)$: Sample i.i.d. $Z_1, ..., Z_m \sim \calN(0, I_d)$ and return 
        $\widehat{\nabla}^m_2 f_\xi(w) \define \frac{1}{m}\sum_{i=1}^m \frac{f(w + \xi {Z_i}) - f(w)}{\xi}{Z_i}$.
        \item $M_m^{SP_\xi}(f, w)$: Sample i.i.d. ${Z_1, Z_2, ..., Z_m} \sim \calN(0, I_d)$ and return $\widehat{\nabla}^m_3 f_\xi(w) \define \frac{1}{m} \sum_{i=1}^m\frac{d}{\xi}f\left(w + \xi {Z_i}\right){Z_i}$
    \end{enumerate}
\end{example}

It is important to note that most work in ZO convex optimization relies on these estimators. Consequently, it becomes essential to define a subclass of randomized zeroth order oracles that generalizes this family of estimators and also \textit{provides a clearer understanding of their privacy implications}. Understanding the properties of such oracles is crucial for analyzing the privacy aspects of ZO methods that reveal their intermediate states.

\section{Privacy of Randomized Zeroth-Order Oracles}
\label{sec:privacyzosgd}

In this section, we define and discuss a subclass of randomized zeroth-order oracles as defined in Definition~\ref{def:zeroorderoracle}, and discuss privacy properties of this subclass. Let us start by defining \zpn oracles:

\begin{definition}
\label{def:multnoiseoracle}
    An oracle $\calO$ is a \zpn oracle if for any $f: \bb{R}^d \rightarrow \bb{R}$, it satisfies the following properties,
    \begin{enumerate}
        \item $\calO(f, w)$ returns $\mathbf{0}_d$ when $f(w) = 0$ for all $w \in \Real^d$ i.e. if $U \sim \calO(0, w)$ then $\bb{P}[U = \mathbf{0}_d] = 1$. 
        \item For $a > 0$, if $f(u) = \frac{a}{2}\twonorm{u}^2$, then $\calO(f, w)$ is a continuous probability measure for $w \neq \mathbf{0}_d$ i.e. if $U \sim \calO(f, w)$ and $w \neq \mathbf{0}_d$ then for all $c \in \bb{R}^d, \bb{P}[U = c] = 0$.
    \end{enumerate}
\end{definition}

It is important to note that, a \zpn oracle does not necessarily imply a zeroth order oracle. For instance, consider the estimator: Sample $Z \sim \calN(0, I_d)$ and return $\widehat{\nabla} f(u) = (Z^T\nabla f(u))Z$. We see that this estimator satisfies the property of being a \zpn oracle, even though it still uses first order information.

Nonetheless, \zpn oracles capture many popular ZO estimators used in the literature. In the following lemma, we show that several of the estimators discussed in Example~\ref{ex:elemestimators} satisfy Definition~\ref{def:multnoiseoracle}.

\vspace{0.1cm}

\begin{lemma}\label{estimator:multiplicative}
    $SPSA$, $FD$, $SP$, $M_m^{SPSA}$, $M_m^{FD}$ and $M_m^{SP}$ are \zpn oracles.
\end{lemma}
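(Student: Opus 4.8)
The plan is to dispatch the two clauses of Definition~\ref{def:multnoiseoracle} separately, since the first is trivial and all the work is in the second. Clause~1 (zero-preservation) holds for every one of the six oracles for the obvious reason: substituting $f \equiv 0$ into each formula in Example~\ref{ex:elemestimators} makes every function evaluation equal $0$, so the estimator returns $\mathbf{0}_d$ with probability $1$, and the mean of $m$ such copies is again $\mathbf{0}_d$. So I would dispose of Clause~1 in a sentence and concentrate on Clause~2, the ``noisy'' part, which must be checked for $f(u) = \frac{a}{2}\twonorm{u}^2$ with $a > 0$ and an arbitrary fixed $w \neq \mathbf{0}_d$.

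First I would record the elementary expansion $\twonorm{w \pm \xi Z}^2 = \twonorm{w}^2 \pm 2\xi\inner{w}{Z} + \xi^2\twonorm{Z}^2$ and observe that, on this quadratic $f$, each of $SPSA$, $FD$, $SP$ returns a random vector of the form $p_w(Z)\,Z$ with $Z \sim \calN(0, I_d)$ and $p_w$ a real polynomial: $p_w(Z) = a\inner{w}{Z}$ for $SPSA$, $p_w(Z) = a\inner{w}{Z} + \frac{a\xi}{2}\twonorm{Z}^2$ for $FD$, and $p_w(Z) = \frac{da}{2\xi}\twonorm{w + \xi Z}^2$ for $SP$; in each case $p_w \not\equiv 0$ since $w \neq \mathbf{0}_d$ and $a > 0$. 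This reduces all three base estimators to one lemma: \emph{if $p : \Real^d \to \Real$ is a polynomial with $p \not\equiv 0$ and $Z \sim \calN(0, I_d)$, then the random vector $p(Z)\,Z$ has no atoms.} I would prove this by fixing $c \in \Real^d$ and splitting on $c = \mathbf{0}_d$ versus $c \neq \mathbf{0}_d$. In the former case $\{p(Z)Z = \mathbf{0}_d\} \subseteq \{Z = \mathbf{0}_d\} \cup \{p(Z) = 0\}$, and both sets are Lebesgue-null — the second because the zero set of a nonzero polynomial is null — hence Gaussian-null by absolute continuity. In the latter case, if $d \geq 2$ then $p(Z)Z = c$ forces $Z$ onto the line $\Real c$, which is null in $\Real^d$; if $d = 1$ then $z \mapsto p(z)z - c$ is a nonzero univariate polynomial with finitely many roots. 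Either way the probability is $0$, so $SPSA$, $FD$, $SP$ are \zpn oracles.

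For the three mean extensions I would use that, for $w \neq \mathbf{0}_d$, averaging with independent noise preserves atomlessness: write $M_m^{\calE}(f, w) = \frac{1}{m}U_1 + \frac{1}{m}\sum_{i=2}^m U_i$, where $U_1$ is a copy of $\calE(f,w)$ — atomless by the previous step — independent of $U_2, \dots, U_m$; scaling by $1/m \neq 0$ keeps it atomless, and for independent $X, Y$ with $X$ atomless, $\bb{P}[X + Y = c] = \bb{E}_Y[\bb{P}[X = c - Y]] = 0$. Together with the trivial Clause~1 for the mean extensions, this settles $M_m^{SPSA}$, $M_m^{FD}$, $M_m^{SP}$ as well.

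The only genuinely non-mechanical point — the one I would flag as the main obstacle — is the $c = \mathbf{0}_d$ branch of the lemma. It would be a mistake to think $p_w(Z)\,Z$ hits $\mathbf{0}_d$ only when $Z = \mathbf{0}_d$: for $FD$ the polynomial $p_w$ also vanishes on a sphere through the origin, and for $SP$ it vanishes at $Z = -w/\xi$, so to exclude an atom at $\mathbf{0}_d$ one really needs the fact that the zero set of a nonzero polynomial has Lebesgue measure zero. Once that is in hand, the rest — the algebraic expansions and the mean-extension argument — is routine bookkeeping.
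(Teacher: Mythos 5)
Your proof is correct and follows essentially the same route as the paper's: both compute the estimator on $f(u)=\tfrac{a}{2}\twonorm{u}^2$ and reduce atomlessness to the fact that a nonzero polynomial in the Gaussian $Z$ has a Lebesgue-null zero set. The paper argues this coordinate-wise (showing $\{\cdot\}_1$ is a non-constant polynomial in $Z$, hence atomless), while you package the same observation as a uniform lemma about $p(Z)\,Z$ with a clean $c=\mathbf{0}_d$ versus $c\neq\mathbf{0}_d$ split; both treat the mean extensions via the atomlessness-preserving property of sums with independent noise.
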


\vspace{0.1cm}

We defer the proof to \cref{app:estimatormultiplicative}. We also give example of another estimator proposed in \citet{zhang2024dpzero} and show that it is also a \zpn oracle. 

 The following lemma shows that any \zpn oracle is not differentially private.
\begin{theorem}
    \label{lemma:non-privacyofZOestimators}
    If $\calO$ is a \zpn oracle (as defined in Definition~\ref{def:multnoiseoracle}), then there exists an L-Lipschitz strongly convex loss function over the set $L\bb{B}^d$ and a pair of datasets such that $\calO$ is not $(\varepsilon, \delta)$-differentially private for any $\varepsilon < \infty$ and any $\delta < 1$.
\end{theorem}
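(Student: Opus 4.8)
The plan is to exploit the two defining properties of a \zpn oracle directly. The key idea is to construct a loss function that equals the zero function on one dataset and equals a scaled squared-norm on a neighbouring dataset, and then query the oracle at a single fixed point $w \neq \mathbf{0}_d$. On the first dataset property~1 forces the oracle to output the deterministic value $\mathbf{0}_d$, so its output distribution is a point mass at the origin. On the second dataset, property~2 forces the oracle's output to be a continuous distribution, which therefore assigns probability zero to the singleton $\{\mathbf{0}_d\}$. Taking the measurable set $O = \{\mathbf{0}_d\}$ then gives $\bb{P}(\calO(\calD) \in O) = 1$ while $\bb{P}(\calO(\calD') \in O) = 0$, so no finite $\varepsilon$ and no $\delta < 1$ can satisfy the DP inequality.

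The concrete construction: take $d_1, \ldots, d_n$ and let $\calL(w; d_i)$ be, say, the zero function for every $i$, so $\calL(w;\calD) \equiv 0$. For the neighbour $\calD'$, replace $d_1$ by a datum $d_1'$ whose per-example loss is chosen so that the \emph{aggregate} loss $\calL(w;\calD') = \tfrac{a}{2}\twonorm{w}^2$ for some $a > 0$; for instance set the per-example loss of $d_1'$ to $\tfrac{na}{2}\twonorm{w}^2$ (or distribute a quadratic across the examples of $\calD'$) — what matters is only that $\calD,\calD'$ differ in exactly one element so they are neighbouring per Definition~\ref{def:dp}, and that the two resulting empirical objectives are the zero function and a positive multiple of $\twonorm{\cdot}^2$. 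One then checks this family is $L$-Lipschitz on $L\bb{B}^d$: the gradient of $\tfrac{a}{2}\twonorm{w}^2$ is $a w$, which has norm at most $aL$ on the ball, so choosing $a \le 1$ (and rescaling the ball radius if needed, or just picking $a$ small) makes it $L$-Lipschitz; it is also $a$-strongly convex, hence strongly convex. The oracle is invoked on these objectives at the fixed point $w_0 = $ some nonzero point of $L\bb{B}^d$.

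The remaining verification is routine: property~1 of Definition~\ref{def:multnoiseoracle} applied to $f \equiv 0$ gives $U \sim \calO(\calL(\cdot;\calD), w_0)$ with $\bb{P}[U = \mathbf{0}_d] = 1$; property~2 applied to $f = \tfrac{a}{2}\twonorm{\cdot}^2$ with $w_0 \neq \mathbf{0}_d$ gives $U' \sim \calO(\calL(\cdot;\calD'), w_0)$ with $\bb{P}[U' = c] = 0$ for every $c$, in particular $\bb{P}[U' = \mathbf{0}_d] = 0$. Plugging $O = \{\mathbf{0}_d\}$ into Equation~\eqref{eq: DP} yields $1 \le e^\varepsilon \cdot 0 + \delta = \delta$, contradicting $\delta < 1$ for any finite $\varepsilon$.

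I do not anticipate a genuine obstacle here; the statement is essentially a direct unpacking of the definition. The one point requiring a little care is the bookkeeping that $\calD$ and $\calD'$ are legitimately neighbouring datasets in the sense of Definition~\ref{def:dp} (symmetric difference of size exactly $2$) while simultaneously the \emph{empirical} losses collapse to the two desired functions — this is why the quadratic must be loaded onto the single swapped example (scaled by $n$) rather than spread uniformly, and why one should state the Lipschitz/strong-convexity constants in terms of that scaling. A secondary cosmetic point is ensuring the per-example losses are themselves convex and Lipschitz on $L\bb{B}^d$ (not just the average), which again just fixes the scaling of $a$.
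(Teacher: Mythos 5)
Your proposal matches the paper's argument essentially step for step: construct two neighbouring datasets, one making the empirical loss identically zero (so property~1 forces a point mass at $\mathbf{0}_d$) and the other making it a positive multiple of $\twonorm{w}^2$ (so property~2 forces a continuous output), then separate them with the singleton $\{\mathbf{0}_d\}$. The paper realizes the construction concretely via $\calL(w;\calX) = \frac{1}{n}\sum_i \twonorm{x_i}\twonorm{w}^2$ with all $x_i = 0$ on one side and $x_n' \neq 0$ on the other, whereas you describe the same mechanism more abstractly (loading a scaled quadratic onto the single swapped datum); you also explicitly note that the query point must be nonzero, a hypothesis the paper uses but leaves implicit. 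These are cosmetic differences — the argument, the measurable set, and the conclusion are identical.
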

\begin{proof}
Let $\calX = \{x_1, x_2, ..., x_n\}$ be the database with $x_i \in \Real^k$ and $\twonorm{x_i} \leq 1, \;\forall i$. Given this database, consider the  function 
    \begin{align*}
        \calL(w; \calX) = \frac{1}{n} \sum_{i = 1}^n \twonorm{x_i} \twonorm{w}^2,
    \end{align*}
    with parameter $w \in L\bb{B}^d$.
    Consider the neighboring databases $\calX = \{x_1, x_2, ..., x_{n-1}, x_n\}$ and $\calX' = \{x_1, x_2, ..., x_{n-1}, x_n'\}$ differing at the last entry (WLOG). Assigning $x_1, ..., x_{n-1}$ to be ${0}_k$. 
    For the last points take, $x_n = 0$ and $x_n' = \frac{1}{\sqrt{k}}{1}_k$. 
    With this construction, we have  $\calL(w; \calX) = 0$ and $\calL(w; \calX') = \frac{L}{n}\twonorm{w}^2$.
    Let $R_\calX \sim \calO(\calL(\cdot; \calX), w)$
    and $ R_{\calX'} \sim \calO(\calL(.; \calX'), w)$.
    By the property of a \zpn oracle, we have $\bb{P}\left[R_\calX = \mathbf{0}_d\right] = 1$, while $R_{\calX'}$ would be a continuous random variable. 
    Hence,  for  the singleton set $S = \{\mathbf{0}_s\}$, we have  $Pr[R_\calX \in S] = 1$ and $Pr[R_{\calX'} \in S] = 0$ because $R_{\calX'}$ is a continuous random variable with unbounded support. Clearly, $\bb{P}[R_\calX \in S] > e^\varepsilon \bb{P}[R_{\calX'} \in S] + \delta$  for any $\varepsilon < \infty$ and $\delta < 1$,  contradicting $(\varepsilon,\delta)$-differential privacy  definition.
\end{proof}

A few key observations about this result are
\begin{enumerate}
    \item \citet{abadipaper, charles2024finetuninglargelanguagemodels} showed that for GD and SGD algorithms to be differentially private, each parameter update must be differentially private if the attacker has access to the all the intermediate states. Thus, understanding the privacy preserving properties of distribution of updates (e.g. ZO oracles) helps us in understanding the privacy of the algorithm itself.
    \item Theorem~\ref{lemma:non-privacyofZOestimators} demonstrates that algorithms involving the sharing of gradient estimates or parameter updates sampled from \zpn oracles between parties are not private. For instance, in many federated learning mechanisms \citep{lowy2023privatefederatedlearningtrusted, pmlr-v206-lowy23a, fedother, gao2024privateheterogeneousfederatedlearning}, gradients (estimates) are shared from silos to a central server. In the absence of a trusted server, the lack of privacy of the updates, from a particular silo, poses a significant risk to the confidentiality of the data within the silo.
\end{enumerate}

It is also important to note that Theorem~\ref{lemma:non-privacyofZOestimators} does not dismiss the privacy guarantee of zeroth order estimators with \textit{independent} additive noise as discussed in \citet{zhang2024dpzero} and \citet{zhang2024private}. This is because gradient estimators with \textit{independent} additive noise do not satisfy the first property of Definition~\ref{def:multnoiseoracle}. Consider the gradient estimator with an additive noise to be $\widehat{\nabla} f(w) = \widehat{\nabla}' f(w) + \gamma$ where $\widehat{\nabla}' f(w): \Real^d \rightarrow \Real^d$ represents an arbitrary estimator and $\gamma \sim \calN(0, \sigma^2 I_d)$ is an independent noise for a $\sigma$ which satisfies $(\epsilon, \delta)$ DP (assuming that $\widehat{\nabla}' f(w)$ has bounded sensitivity). We see that (in the worst case) when $\widehat{\nabla}' f(w)$ is a constant $\widehat{\nabla} f(w)$ is still a continuous distribution since $\gamma$ is an independent noise added.

\section{Privacy of ZO SGD}
\label{sec:privacyzosgdunk}

At this point, we have given a partial answer to the question asked in \citet{zhang2024dpzero, tang2024privatefinetuninglargelanguage} with respect to privacy of \textit{their zeroth order oracles}. However, this result does not account for the fact when we have no knowledge about the intermediate states of the algorithm. \citet{ NEURIPS2021_7c6c1a7b, altschuler2022privacy} have proven that when one considers the case that the attacker has no access to hidden states, then after a small burn-in period, Projected Noisy SGD on strongly convex and convex functions incurs no additional loss in privacy as T increases. Thus, it is possible for some iterates to not be private individually, but the noise due to the zeroth order oracle ``accumulates" over time and gives certain privacy guarantees for the final iterate. Therefore, a natural question is as follows:

\fbox{
    \begin{minipage}{\columnwidth - 14pt}
    \textit{Is {the inherent noise of ZO Stochastic Gradient Descent (SGD) with a constant initialization sufficient to preserve privacy given access to the final iterate only?}  }
    \end{minipage}
}

The following theorem answers this question.

\begin{theorem}
    \label{multidim:fixedinit} Consider running T steps of Algorithm \ref{alg:proj_zo_sgd} using a \zpn oracle $\calO$, as defined in Definition~\ref{def:multnoiseoracle}, with $D > 0$ and $\calR_{init}$ as a fixed constant $w_0 \in \Real^d$ such that $\twonorm{w_0} < D$. Assume that the algorithm only returns the final iterate. Then, there exists an L-Lipschitz linear loss function over the set $[-D, D]^d \subset \Real^d$ such that for any $T \geq 1$ the output of Algorithm \ref{alg:proj_zo_sgd} is not $(\varepsilon, \delta)$-differentially private for any $\varepsilon < \infty$, $\delta < 1$
\end{theorem}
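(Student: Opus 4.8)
The plan is to exhibit a linear loss that, when composed with a \zpn oracle, produces a deterministic trajectory on one of the two neighboring databases and then appeal to the same "deterministic vs.\ continuous" argument used in the proof of Theorem~\ref{lemma:non-privacyofZOestimators}. The key idea is to reuse the construction where all data points except the last are $\mathbf{0}$, so that on the database $\calX$ the empirical loss is identically zero. Concretely, let $\calX = \{x_1,\dots,x_{n-1},x_n\}$ with $x_1=\dots=x_{n-1}=\mathbf{0}$ and $x_n=\mathbf{0}$, and let $\calX' $ differ only in the last coordinate with $x_n'\neq \mathbf{0}$; choose the per-example loss to be linear, say $\calL(w;\calX)=\frac{1}{n}\sum_i \twonorm{x_i}\,\inner{v}{w}$ for a fixed unit direction $v$ (scaled so the function is $L$-Lipschitz over $[-D,D]^d$). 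Then $\calL(\cdot;\calX)\equiv 0$ while $\calL(\cdot;\calX')$ is a nonzero linear function.

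First I would use property~1 of Definition~\ref{def:multnoiseoracle}: since $\calL(\cdot;\calX)\equiv 0$, every oracle call $\calO(\calL(\cdot;\calX),w_{t-1})$ returns $\mathbf{0}_d$ almost surely, so on database $\calX$ the iterates satisfy $w_t = \Pi_{[-D,D]^d}(w_{t-1}) = w_{t-1}$ deterministically (using $\twonorm{w_0}<D$, in fact $w_0$ is fixed in the box so no projection is needed), and hence the final iterate equals $w_0$ with probability $1$. Second, I would argue that on database $\calX'$ the distribution of the final iterate $w_T$ is continuous, i.e.\ assigns zero mass to the singleton $\{w_0\}$. The cleanest way is to show the first oracle call already produces a continuous update: the oracle is evaluated on the fixed nonzero linear function $\calL(\cdot;\calX')$ at the fixed point $w_0$, so its output is a fixed probability measure; I need this measure to be non-atomic at the one value that would keep $w_1=w_0$. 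For the standard estimators ($SPSA$, $FD$, $SP$ and their mean extensions) this is immediate because the update is $g(Z)\cdot Z$ for a scalar $g(Z)$ depending smoothly on the Gaussian $Z$, so $\Pi(w_0-\eta g(Z)Z)$ has a continuous law (away from degenerate $\eta$); more generally, rather than re-deriving this I would strengthen the definition or invoke the second property of Definition~\ref{def:multnoiseoracle} via a reduction, or simply note that the continuity claim for linear functions can be folded into the hypothesis exactly as the quadratic case is in the quadratic proof. Then, for the singleton $S=\{w_0\}$, $\bb{P}[w_T\in S\mid\calX]=1$ while $\bb{P}[w_T\in S\mid\calX']=0$ (continuity of $w_1$'s law, pushed through the remaining deterministic-given-randomness steps, or just bounding $\bb{P}[w_1=w_0]=0$ and noting $\{w_T=w_0\}\subseteq\{w_1=w_0\}$ is false in general — instead I would directly track that $\bb{P}[w_T = w_0 \mid \calX'] = 0$ by conditioning on the randomness of the first step). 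This yields $\bb{P}[\calA(\calX)\in S] > e^\varepsilon\bb{P}[\calA(\calX')\in S]+\delta$ for every finite $\varepsilon$ and every $\delta<1$, contradicting $(\varepsilon,\delta)$-DP.

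The main obstacle is the second step: unlike the quadratic case, Definition~\ref{def:multnoiseoracle} only guarantees non-atomicity of the oracle's output on \emph{quadratic} functions $\frac{a}{2}\twonorm{u}^2$, not on linear functions, so I cannot directly conclude that $w_T$ is continuous on $\calX'$. I would handle this either by (i) observing that for linear loss the final iterate is a (projected) sum of $T$ oracle outputs around points that are themselves random, and arguing that at least the \emph{first} coordinate has a density — which requires knowing the oracle does something non-degenerate on nonzero linear functions — or, more robustly, (ii) slightly modifying the hard instance so that the relevant object the oracle sees is effectively quadratic-like (e.g.\ use $\calL(w;\calX')$ linear but note $-\eta\gradest$ composed with projection onto the box corner behaves generically), or (iii) adding to the proof a short lemma, analogous to Lemma~\ref{estimator:multiplicative}, verifying that $SPSA,FD,SP$ and their mean extensions all produce a continuous update when applied to a nonzero linear function at a fixed point, thereby covering precisely the algorithms of \citet{tang2024privatefinetuninglargelanguage} and \citet{zhang2024dpzero}. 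The rest of the argument — the deterministic trajectory on $\calX$ and the final DP-violation inequality — is routine and mirrors the proof of Theorem~\ref{lemma:non-privacyofZOestimators}.
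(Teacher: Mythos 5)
Your overall template is identical to the paper's: the same pair of neighbouring databases ($x_1=\dots=x_n=\mathbf{0}$ vs.\ $x_n'\neq\mathbf{0}$), the same observation that on $\calX$ the iterate is deterministically $w_0$ by property~1 of Definition~\ref{def:multnoiseoracle}, and the same choice of singleton test set $S=\{w_0\}$ to blow up $\varepsilon$. However, you take the word ``linear'' in the theorem statement literally and build $\calL(w;\calX)=\frac{1}{n}\sum_i\twonorm{x_i}\inner{v}{w}$, which leads you into the real gap you yourself flag: property~2 of Definition~\ref{def:multnoiseoracle} only asserts non-atomicity of the oracle on the quadratic $f(u)=\frac{a}{2}\twonorm{u}^2$, so there is nothing in the \zpns abstraction that forces $\calO(\calL(\cdot;\calX'),w)$ to be continuous when $\calL(\cdot;\calX')$ is linear. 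None of your three workarounds closes this gap while staying inside the stated hypotheses: option~(i) needs an unproved anti-degeneracy assumption on linear inputs, option~(ii) is vague, and option~(iii) quietly downgrades the theorem from ``any \zpns oracle'' to ``the specific estimators $SPSA$, $FD$, $SP$,'' which is a strictly weaker claim than what is being proved.

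The paper's proof avoids this entirely by reusing the quadratic loss from Theorem~\ref{lemma:non-privacyofZOestimators}, namely $\calL(w;\calX)=\frac{1}{n}\sum_i\twonorm{x_i}\twonorm{w}^2$, so that $\calL(\cdot;\calX')=\frac{L}{n}\twonorm{w}^2$ is exactly of the form $\frac{a}{2}\twonorm{u}^2$ covered by property~2. The ``linear'' in the theorem statement appears to be a misstatement (it is also inconsistent with the $[-D,D]^d$ vs.\ $D\Ball^d$ domain in the statement vs.\ proof), and the argument does not rely on linearity in any way. So the fix to your proposal is simpler than any of your (i)--(iii): just keep the quadratic construction from Theorem~\ref{lemma:non-privacyofZOestimators}. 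One more subtlety worth absorbing from the paper's version: to propagate ``no atom at $w_0$'' across all $T$ steps you cannot simply condition on the first step; you must argue inductively that $W_t^{\chi'}$ places no mass at any interior point of the ball (since projection can only create atoms on the boundary, and $w_0$ is strictly interior because $\twonorm{w_0}<D$), which also requires ruling out an atom of $W_{t-1}^{\chi'}$ at the origin so that property~2 is applicable at every step. Your parenthetical acknowledges that $\{w_T=w_0\}\subseteq\{w_1=w_0\}$ is false but does not fully replace it with this induction.
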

\begin{proof}
    Consider the following function for a database $\calX = \{x_1, x_2, ..., x_n\}$ where for all $i \in [n]$ $x_i \in \Real^k$ such that $\twonorm{x_i} \leq 1$ with parameter $w \in L\bb{B}^d$
    \begin{align*}
        \calL(w; \calX) = \frac{1}{n} \sum_{i = 1}^n \twonorm{x_i} \twonorm{w}^2.
    \end{align*}
    
    Consider the neighboring databases differing at the last entry (WLOG) $\calX = \{x_1, x_2, ..., x_{n-1}, x_n\}$ and $\calX' = \{x_1, x_2, ..., x_{n-1}, x_n'\}$. Assign $x_1, ..., x_{n-1}$ to be ${0}_k$. For the last points take, $x_n = 0$ and $x_n' = \frac{1}{\sqrt{k}}{1}_k$. With the above construction, we have the following two functions under $\calX$ and $\calX'$, $\calL(w; \calX) = 0$ and $\calL(w; \calX') = \frac{L}{n}\twonorm{w}^2$.
    
    Take the random variables for iterates corresponding to running Algorithm \ref{alg:proj_zo_sgd} on $\chi$ and $\chi'$ to be $W^\chi_t$ and $W^{\chi'}_t$ respectively.  Since $\calL(w;\chi) = 0$, then by the first property of $\calO$ along with the fact that $\calO(f, w) = \mathbf{0}_d$ for a constant $f$, we have that $\bb{P}\left[\calO(\calL(\cdot, \chi), w) = \mathbf{0}_d\right] = 1$. This implies that the iterate would not change on each step and since $w_0 \in D\Ball^d$, projection would be identity at each step, which implies $W^\chi_t = w_0$.
    
    On the other hand, $W_t^{\chi'} = \Pi_{D\Ball^d}\paren{W_{t-1}^{\chi'} - \eta \widehat{\nabla}\calL(W_{t-1}; \calX')}$. Since the minima of $\calL(w; \calX')$ lies strictly at $w = {0}$, it implies that $\calL(w_0; \calX') \neq 0$ and since $\calL(w; \calX')$ is not a constant, then by the second property in Definition~\ref{def:multnoiseoracle} it implies that $\widehat{\nabla}\calL(W^{\chi'}_{t-1}, \chi')$ is a continuous distribution, which implies that $W_{t-1}^{\chi'} - \eta \widehat{\nabla}\calL(W_{t-1}; \calX')$ is a continuous distribution. However, since we are projecting into a ball at every iteration, then it implies that for $w \in D\bb{B}^d$ such that $\twonorm{w} < D$, $\bb{P}[W_t^{\chi'} = w] = \bb{P}[W_{t-1}^{\chi'} - \eta \widehat{\nabla}\calL(W_{t-1}; \calX') = w]= 0$ while for $\twonorm{w} = D$, $\bb{P}[W_t^{\chi'} = D] = \bb{P}[W_{t-1}^{\chi'} - \eta \widehat{\nabla}\calL(W_{t-1}; \calX') \geq D]$.      
    Thus, if we consider the singleton set $S = \{w_0\}$, then we would get that $Pr[W^\chi_T \in S] = 1$ and $Pr[W^{\chi'}_T \in S] = 0$ because $\twonorm{w_0}$ is strictly less than $D$. Hence, $\bb{P}[W^\chi_T \in S] > e^\varepsilon \bb{P}[W^\chi_T \in S] + \delta$ for any $\varepsilon < \infty$ and $\delta < 1$.
\end{proof}

 This result answers the open question proposed by \citet{tang2024privatefinetuninglargelanguage} and \citet{zhang2024dpzero} on the privacy of zeroth order (S)GD with a fixed initialization. Thus, it dismisses the promise of privacy of (S)GD under such oracles. It is important to note that our framework does not capture certain algorithms like Stochastic Zeroth Order Conditional Gradient Descent \citep{Balasubramanian2022-zu} or Mirror Descent \citep{duchioptzero}. It would be interesting to see if such a result can be generalized for other classes of optimization algorithms discussed in \citet{duchioptzero} or \citet{Balasubramanian2022-zu}.

Notably, in modern machine learning tasks, model parameters are initialized randomly \citep{7410480}. Thus, if we consider the optimization of the functions corresponding to two neighbouring datasets as defined in Definition~\ref{def:dp}, then we cannot comment about the distribution of the final iterate. Therefore, a natural extension to our previous question emerges as follows:

\fbox{
    \begin{minipage}{\columnwidth - 14pt}
    \textit{Is {the inherent noise of ZO Stochastic Gradient Descent (SGD) \textbf{with random initialization} sufficient to preserve privacy given access to the final iterate only?}  }
    \end{minipage}
}

 It is important to note that the distribution of the initial iterate is continuous. Thus, by proof of Theorem~\ref{multidim:fixedinit}, the continuity of the \zpn oracle is no longer sufficient to ensure privacy. We define a new class of oracles below for which we analyse privacy.

\begin{definition}
    \label{def:badoracle}
    An update oracle $\calO$ is $C_s$-\ac (AC) if there exists an index $i^* \in [d]$ such that if we consider the function class $\c{F} = \left\{\inner{g\mathbf{e}_{i^*}}{w}: - L \leq g \leq 0\right\}$, then for any $h \in \c{F}$ and $\overline{w} \in \Real^d$, if $U \sim \calO(h, \overline{w})$ then,
    \begin{enumerate}
        \item $\{\nabla h(\overline{w})\}_{i^*} = 0$ implies $U = \mathbf{0}_d$ w.p. 1 i.e. $\bb{P}\left[U = \mathbf{0}_d\right] = 1$
        \item $\{\nabla h(\overline{w})\}_{i^*} \neq 0$ implies $\bb{P}\left[\{U\}_{i^*} < 0\right] = 1$ 
        \item $\bb{E}\left[\{U\}_{i^*}\right] \leq \{\nabla h(\overline{w})\}_{i^*}$
        \item For any set of $\{w_1, w_2, ..., w_N\} \subset \Real^d$, let $U_j \sim \calO(h, w_j)$ independently for all $j \in [N]$. Then $\Ex{\paren{\sum_{j=1}^N \{U_j\}_{i^*}}^2}\leq C_s\Ex{\sum_{j=1}^N \{U_j\}_{i^*}}^2$
    \end{enumerate}
\end{definition}

This oracle roughly gives us a stronger guarantee (than \zpn oracles) that updates for a certain class of functions (with non-zero gradients) would be strictly increasing while ensuring a zero-preserving property (like \zpn oracles) on functions which are zero over $\Real^d$. Further its distributional properties, like the condition on expectation and variance, are necessary for ensuring an ``anti-concenteration'' phenomena over the updates (hence the name). This suggests that the updates from AC oracles would shift the iterates away from the initial point with a good probability, even when it is randomized.

Similar to the definition of \zpn oracles, this seemingly specific oracle class is able to capture the two point oracles we have discussed so far. Specifically, we show that the $SPSA$ and $FD$ estimators are 3-\acs~oracles.

\begin{lemma}
    SPSA and FD are 3-\acs.
\end{lemma}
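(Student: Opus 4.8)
The plan is to observe that on the linear function class $\c{F} = \{h(w) = g\,\{w\}_{i^*} : -L \le g \le 0\}$ — fixing, say, $i^* = 1$, since both estimators treat all coordinates symmetrically — the $SPSA$ and $FD$ oracles produce the \emph{same} random output, and that output has a transparent form. Indeed, for $h(w) = \inner{g\mathbf{e}_{i^*}}{w}$ one has $h(w+\xi Z) - h(w-\xi Z) = 2\xi g\{Z\}_{i^*}$ and $h(w+\xi Z) - h(w) = \xi g\{Z\}_{i^*}$, so in both cases the oracle returns
\[
U \;=\; g\,\{Z\}_{i^*}\,Z, \qquad Z \sim \c{N}(0, I_d),
\]
independently of the query point; in particular $\{U\}_{i^*} = g\,\{Z\}_{i^*}^2$. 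This collapse of both estimators to a single simple vector is the key observation — once it is in hand, the four conditions of Definition~\ref{def:badoracle} are short computations.

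I would then check the conditions in order, using $\{\nabla h(\overline w)\}_{i^*} = g$. Condition 1: if $g = 0$ then $U = \mathbf{0}_d$ deterministically. Condition 2: if $g \ne 0$ then $g < 0$, and $\{Z\}_{i^*}^2 > 0$ almost surely, so $\{U\}_{i^*} = g\{Z\}_{i^*}^2 < 0$ with probability $1$. Condition 3: $\bb{E}[\{U\}_{i^*}] = g\,\bb{E}[\{Z\}_{i^*}^2] = g = \{\nabla h(\overline w)\}_{i^*}$, so the required inequality holds with equality. None of these uses the location $\overline w$, which is consistent with the linearity of $h$.

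For condition 4, given $w_1,\dots,w_N$ and independent draws $U_j \sim \calO(h, w_j)$, write $\{U_j\}_{i^*} = g\{Z_j\}_{i^*}^2$ with $Z_1,\dots,Z_N$ i.i.d.\ $\c{N}(0,I_d)$, so that $\sum_{j=1}^N \{U_j\}_{i^*} = g A$ with $A = \sum_{j=1}^N \{Z_j\}_{i^*}^2 \sim \chi^2_N$. Then $\bb{E}\big[(\sum_j \{U_j\}_{i^*})^2\big] = g^2\bb{E}[A^2] = g^2(N^2 + 2N)$, using $\bb{E}[A] = N$ and $\mathrm{Var}(A) = 2N$, while $\big(\bb{E}[\sum_j \{U_j\}_{i^*}]\big)^2 = g^2 N^2$; hence the ratio is $1 + 2/N \le 3$ for all $N \ge 1$ (and both sides vanish when $g = 0$). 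Taking $C_s = 3$ then completes the argument for both $SPSA$ and $FD$.

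The only place requiring any care is condition 4: one must recall $\mathrm{Var}(\chi^2_N) = 2N$ and notice that $1 + 2/N$ is maximized at $N = 1$, which is exactly where the constant $3$ comes from — so the bound is tight within this family of inputs and cannot be improved by this argument. Everything else follows mechanically from the reduction to $g\{Z\}_{i^*}Z$ on linear functions, after which the estimator-specific differences disappear.
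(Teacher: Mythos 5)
Your proof is correct and follows essentially the same route as the paper's: fix $i^* = 1$, observe that on the linear class $\c{F}$ both estimators collapse to $g\{Z\}_{i^*}Z$ so that $\{U\}_{i^*} = g\{Z\}_{i^*}^2$ is a scaled $\chi^2_1$ variable, and verify the four properties via the mean $N$ and variance $2N$ of $\chi^2_N$, yielding $C_s = 1 + 2/N \le 3$. If anything your writeup is slightly cleaner on condition 2 (noting $\{Z\}_{i^*}^2 > 0$ almost surely to get the strict inequality, rather than the non-strict $gV \le 0$ the paper states) and avoids a small sign slip the paper makes in writing $\abs{\{\nabla f\}_{i^*}} = g$ when $g \le 0$.
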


\begin{proof} We defer the complete proof to Appendix~\ref{app:baddiscussion}. (Sketch) The proof comes from fixing $i^* = 1$ and analyzing the output of the estimator for any function in the function class ($\calF$) corresponding to $i^*$  as specified in Definition~\ref{def:badoracle}. Due to the difference form of the estimators, the distribution of the first index of the outputs of these estimators turns out to be a chi-squared random variable with the degree of freedom 1 scaled by the parameter $g$ defined in Definition~\ref{def:badoracle}. Hence, this distribution satisfies all the properties mentioned in Definition~~\ref{def:badoracle}, thus completing the proof. 
\end{proof}

 We further prove that the two-point oracle proposed by \citet{duchioptzero} is also 3-\acs. Moreover, the property of being an AC oracle is preserved under the mean extension, as shown by the following lemma.

\begin{lemma}
    If an oracle $\c{E}$ is $C_s$-\acs~ then $M_m^{\c{E}}$ is $C_s$-\acs.
\end{lemma}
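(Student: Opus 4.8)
The plan is to verify, for $M_m^{\c{E}}$, each of the four conditions in Definition~\ref{def:badoracle} using the \emph{same} distinguished coordinate $i^*$ and the \emph{same} constant $C_s$ that witness $\c{E}$ being $C_s$-AC. Fix $h \in \c{F}$; note that a draw from $M_m^{\c{E}}(h,w)$ has the form $\frac{1}{m}\sum_{k=1}^m U_k$ with $U_1,\dots,U_m \sim \c{E}(h,w)$ i.i.d., and every internal call is to $\c{E}(h,\cdot)$ with the same $h \in \c{F}$, so the AC properties of $\c{E}$ are applicable at each call. Conditions 1--3 are then immediate from linearity of expectation together with the observation that an average of almost-surely-constant (resp. almost-surely-negative) scalars is almost surely constant (resp. negative): if $\{\nabla h(\overline w)\}_{i^*}=0$ then each $U_k=\mathbf{0}_d$ a.s., hence so is their mean; if $\{\nabla h(\overline w)\}_{i^*}\neq 0$ then $\{U_k\}_{i^*}<0$ a.s. for each $k$, hence $\frac1m\sum_k\{U_k\}_{i^*}<0$ a.s.; and $\Ex{\frac1m\sum_k\{U_k\}_{i^*}}=\frac1m\sum_k\Ex{\{U_k\}_{i^*}}\le \{\nabla h(\overline w)\}_{i^*}$ by Condition 3 for $\c{E}$.

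The only condition needing a genuine (but short) argument is Condition 4. Given points $w_1,\dots,w_N$, write independent draws $V_j\sim M_m^{\c{E}}(h,w_j)$ as $V_j=\frac1m\sum_{k=1}^m U_{j,k}$, where $\{U_{j,k}: j\in[N],\,k\in[m]\}$ are mutually independent with $U_{j,k}\sim\c{E}(h,w_j)$. Setting $S:=\sum_{j=1}^N\sum_{k=1}^m\{U_{j,k}\}_{i^*}$, we have $\sum_{j=1}^N\{V_j\}_{i^*}=\frac1m S$, so the claimed bound $\Ex{\paren{\sum_j\{V_j\}_{i^*}}^2}\le C_s\Ex{\sum_j\{V_j\}_{i^*}}^2$ is, after cancelling $1/m^2$ from both sides, exactly $\Ex{S^2}\le C_s\Ex{S}^2$. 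But the family $\{U_{j,k}\}$ is precisely a collection of $Nm$ independent oracle outputs, one at each point of the length-$Nm$ list obtained by repeating each $w_j$ exactly $m$ times, so applying Condition 4 for $\c{E}$ to that list gives $\Ex{S^2}\le C_s\Ex{S}^2$ and we are done.

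The single point that requires care is that Condition 4 of Definition~\ref{def:badoracle} must be understood as applying to an arbitrary finite \emph{list} of query points, possibly with repeats, rather than a set of distinct points — this is exactly what licenses feeding in the $m$-fold repeated list above; I would either make this reading explicit in the statement of the definition, or, if distinctness is insisted upon, observe that the repeated-point case is the one that arises here and that the verifications of Condition 4 for the base estimators (e.g.\ $SPSA$, $FD$, and the two-point oracle of \citet{duchioptzero}) use no distinctness assumption. Beyond this bookkeeping, the argument is entirely routine and presents no real obstacle.
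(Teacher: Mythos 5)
Your verifications of properties 1--3 match the paper's argument; the interesting difference is in property~4. The paper fixes one sample index $u\in[m]$, sets $S_u=\sum_{k=1}^N\{U^{(f,w_k)}_u\}_{i^*}$, applies property~4 of $\c{E}$ to the \emph{original} $N$-point list to get $\Ex{S_u^2}\le C_s\Ex{S_u}^2$, and then passes from a single $S_u$ to the average $\frac1m\sum_u S_u$ via the convexity bound $\Ex{(\frac1m\sum_u S_u)^2}\le \frac1m\sum_u\Ex{S_u^2}$ (which they attribute to Young's inequality). You instead collapse the full double array $\{U_{j,k}\}_{j\in[N],k\in[m]}$ into one application of property~4 on the length-$Nm$ list obtained by repeating each $w_j$ exactly $m$ times; after cancelling the common $1/m^2$ factor, the target inequality is literally $\Ex{S^2}\le C_s\Ex{S}^2$ with $S=\sum_{j,k}\{U_{j,k}\}_{i^*}$, and no convexity step is needed. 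Both arguments are correct and yield the same constant $C_s$, but yours is shorter and more transparent: it makes explicit that the mean extension does nothing that the base oracle's anti-concentration property was not already built to handle.

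The caveat you raise --- that Definition~\ref{def:badoracle} writes property~4 in terms of a ``set'' $\{w_1,\dots,w_N\}$, whereas your argument feeds it a list with each $w_j$ repeated $m$ times --- is the right thing to flag. The paper's route quietly avoids this by only ever invoking property~4 on the caller's original $N$ points, so it never needs repeats. Your route does need them, and your justification is sound: the base verifications for $SPSA$, $FD$, and $G_{\mu_1,\mu_2}$ in Appendix~\ref{app:baddiscussion} treat $\sum_{j=1}^N V_j$ as a $\chisq(N)$ variable for arbitrary $N$, and this computation never uses distinctness of the $w_j$'s, so property~4 holds for arbitrary finite lists for all oracles under consideration. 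Either reading the definition as quantifying over lists, or adding the one-line remark you propose, closes the gap. No other issues.
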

\begin{proof}
    We defer the complete proof to Appendix~\ref{app:baddiscussion}. (Sketch) The argument here is, we are taking mean of i.i.d. random variables $U^{(f,w_j)}_i \sim \c{E}(f, w_j)$ for all $i \in [m]$ for $\c{E}$ which are $C_s$ anti-concentrated. Hence, the ``strict'' properties (namely zero-preserving property and the strict negativity under non-zero gradient of AC oracles) of samples of $\c{E}(f, w_j)$ also comply to the mean of i.i.d. samples from distribution satisfying property 1 and property 2. Due to linearity of expectation, the same upper bound on the expectation holds for the mean satisfying property 3. Due to reordering of RV and Young's inequality, property 4 also complies to the mean.
\end{proof}

This lemma directly implies that $M_m^{FD}$ and $M_m^{SPSA}$ are 3-\acs. 

If we restrict ourselves to the class of zeroth order oracles, then we make an interesting observation: All the estimators discussed above that use at least \textit{two function evaluations} satisfy the properties mentioned in Definition~\ref{def:badoracle}. On the other hand, for the $SP$ estimator defined in Example~\ref{ex:elemestimators}, we observe that it does not satisfy the third property of Definition~\ref{def:badoracle}. Thus, restricted to zeroth order oracles, the idea of an \acs~oracle roughly captures an important distinction between popular single-point and two-point estimator(s).

Using the definition of an \acs~oracle, we show that even with unknown (but random) initialization, the model loses privacy for a large enough diameter of the constraint set.

\begin{theorem}
    \label{multidim:varinit}
     Consider running T steps of Algorithm \ref{alg:proj_zo_sgd} using a $C_s$-\acs~oracle $\calO$, as defined in Definition~\ref{def:badoracle}, with $D > \frac{\eta L}{2n}$ and $\calR_{init}$ is $\calN(0, \sigma^2I)$. Assume that the algorithm only returns the final iterate. Then, there exists an L-Lipschitz linear loss function over the set $[-D, D]^d \subset \Real^d$ such that for any $T \geq 1$ the output of Algorithm \ref{alg:proj_zo_sgd} is not $(\varepsilon, \delta)$-differentially private for any $\varepsilon, \delta$ satisfying $\delta \leq \frac{1}{16C_s}\max\left\{\frac{1}{T^{2/3}}, \frac{\eta L}{2nD}\right\}$ and
    \begin{align*}
        \varepsilon \leq \min\left\{\frac{\eta^2L^2T^{4/3}}{8n^2\sigma^2}, \frac{D^2}{2\sigma^2}\right\} + \ln\paren{\frac{\sqrt{2\pi}L\eta}{64C_sn\sigma}}.
    \end{align*}
\end{theorem}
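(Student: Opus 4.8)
The plan is to mount the attack on the single coordinate $i^*$ supplied by Definition~\ref{def:badoracle}, on an instance where one of the two neighboring runs stays frozen while the other is driven toward the boundary $+D$ with enough anti-concentration to ``overshoot'' the initialization noise. \textbf{The instance.} Fix $i^*$ and $\calF=\{\inner{g\b{e}_{i^*}}{\cdot}:-L\le g\le 0\}$ as in Definition~\ref{def:badoracle}, and use per-example loss $\ell(w;x)=\inner{x}{w}$ on data with $\twonorm{x}\le L$, so that $\calL(\cdot;\calX)=\inner{\frac1n\sum_i x_i}{\cdot}$ is linear and $L$-Lipschitz. Take neighbors $\calX$ (all $x_i=\b{0}_d$, hence $\calL(\cdot;\calX)\equiv 0$) and $\calX'$ (equal to $\calX$ except $x_n=-L\b{e}_{i^*}$), so $\calL(\cdot;\calX')=\inner{-\frac Ln\b{e}_{i^*}}{\cdot}\in\calF$ with $\{\nabla\calL(\cdot;\calX')\}_{i^*}=-\frac Ln\ne 0$. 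On $\calX$, Property~1 forces every oracle output to equal $\b{0}_d$ almost surely, so after the first projection the iterate freezes: $\{W_T^{\calX}\}_{i^*}=\Pi_{[-D,D]}(\{w_0\}_{i^*})$. On $\calX'$, Properties~2 and~3 give $\{U_t\}_{i^*}<0$ a.s.\ and $\Ex{\{U_t\}_{i^*}}\le-\frac Ln$ for every step $t$, so the coordinate-$i^*$ increment $-\eta\{U_t\}_{i^*}$ is strictly positive.

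\textbf{Clipping domination and anti-concentration.} Since $[-D,D]^d$ projects coordinatewise and each coordinate-$i^*$ increment is nonnegative, an induction on $t$ begun at $t=1$ (which absorbs a possibly infeasible $w_0$) yields the deterministic domination $\{W_T^{\calX'}\}_{i^*}\ge\Pi_{[-D,D]}(\{w_0\}_{i^*}+\eta Z)$, where $Z:=-\sum_{t=1}^{T}\{U_t\}_{i^*}\ge 0$; the only nontrivial point is $\Pi_{[-D,D]}(\Pi_{[-D,D]}(a)+v)\ge\Pi_{[-D,D]}(a+v)$ for $v\ge 0$, which is monotonicity of clipping. Property~3 gives $\Ex{Z}\ge\frac{TL}{n}$, and Property~4 applied to the iterates gives $\Ex{Z^2}\le C_s\Ex{Z}^2$ --- legitimately, because for the \emph{linear} objective $\calL(\cdot;\calX')$ the oracles in question output the same law at every query point, so the $U_t$ are i.i.d.\ and Property~4 is exactly in force. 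Paley--Zygmund then gives $\bb P[Z\ge\theta\tfrac{TL}{n}]\ge\bb P[Z\ge\theta\Ex{Z}]\ge(1-\theta)^2/C_s$ for every $\theta\in[0,1]$.

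\textbf{The separating set.} Put $\beta=\min\{\frac{LT^{2/3}}{2n},\frac D\eta\}$; the hypothesis $D>\frac{\eta L}{2n}$ makes $\eta\beta\le D$ and forces the Paley--Zygmund parameter $\theta=\beta n/(TL)$ to be $\le\frac12$ in both the uncapped case ($\theta=\frac12 T^{-1/3}$) and the capped case ($\theta=\frac{nD}{\eta TL}$), so $p:=\bb P[Z\ge\beta]\ge\frac1{4C_s}\max\{T^{-2/3},\frac{\eta L}{2nD}\}$ (using $(1-\frac12 T^{-1/3})^2\ge\frac14 T^{-2/3}$ for $T\ge 1$, and $(1-\theta)^2\ge\frac14\ge\frac{\eta L}{8nD}$), and in both cases $p\beta\ge\frac{L}{8C_s n}$. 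Take $O=\{w:\{w\}_{i^*}\ge\eta\beta\}$ and test the neighboring pair with $\calX'$ in the role of $\calD$. Since $\eta\beta\le D$, $\bb P[\calA(\calX)\in O]=\bb P[\{w_0\}_{i^*}\ge\eta\beta]=\Phi(-\eta\beta/\sigma)\le\frac{\sigma}{\sqrt{2\pi}\,\eta\beta}\exp(-\frac{\eta^2\beta^2}{2\sigma^2})$ (Gaussian tail), while the domination bound and independence of $\{w_0\}_{i^*}$ from $Z$ give $\bb P[\calA(\calX')\in O]\ge\bb P[\{w_0\}_{i^*}\ge 0,\ \eta Z\ge\eta\beta]=\frac12 p$. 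Since $\frac{\eta^2\beta^2}{2\sigma^2}=\min\{\frac{\eta^2L^2T^{4/3}}{8n^2\sigma^2},\frac{D^2}{2\sigma^2}\}$, the DP condition $\bb P[\calA(\calX')\in O]\le e^\varepsilon\bb P[\calA(\calX)\in O]+\delta$ fails as soon as $\delta\le p/4$ and $e^\varepsilon\le p/(8\Phi(-\eta\beta/\sigma))$; substituting the bounds on $p$, $p\beta$ and $\Phi(-\eta\beta/\sigma)$ converts these into precisely the $\delta$- and $\varepsilon$-ranges in the statement.

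\textbf{Where the work is.} The two delicate points are the clipping-domination lemma --- it must be run coordinatewise and must tolerate $w_0\notin[-D,D]^d$, which is why the induction starts after the first step --- and the use of Property~4 of Definition~\ref{def:badoracle}, which is phrased for deterministic query points, at the algorithm's random iterates; linearity of $\calL(\cdot;\calX')$ rescues this by making the update law query-independent, so the increments are genuinely i.i.d. Everything else is the two-regime choice of $\beta$ and routine bookkeeping with the Gaussian tail and the Paley--Zygmund constant to land on the $\max\{T^{-2/3},\frac{\eta L}{2nD}\}$ and $\min\{\cdot,\frac{D^2}{2\sigma^2}\}$ forms --- and this is exactly where the superlinear ($T^{4/3}$) growth of the $\varepsilon$ for which privacy fails appears.
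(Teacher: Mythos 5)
Your proof is essentially the same as the paper's: identical linear loss and neighboring-database construction, the same separating set in coordinate $i^*$ (your unified $\beta=\min\{LT^{2/3}/(2n),\,D/\eta\}$ replaces the paper's explicit two-case split but produces the same thresholds), the same Paley--Zygmund lower bound on the $\calX'$ run, the same Gaussian-tail upper bound on the $\calX$ run, and the same arithmetic to arrive at the stated $\delta$ and $\varepsilon$ ranges. The one thing you add beyond the paper is explicitly flagging that Property~4 of Definition~\ref{def:badoracle} is phrased for independent draws at deterministic query points yet is being invoked at the algorithm's random, adapted iterates, and patching it by noting that for a linear objective the SPSA/FD-type oracles have a query-independent output law so the $U_t$ are genuinely i.i.d.; the paper applies Property~4 at the random iterates without comment and thus tacitly relies on the same fact (which holds for every oracle the paper actually proves to be $C_s$-AC, even if it is not forced by the abstract definition).
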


We defer the proof to Appendix~\ref{app:multidim:varinit}. %
If we take the values of $n \in [100, 1000]$, $\eta \in [0.001, 0.01]$, $C_s = 3$, $L = 100$, and $\sigma = \frac{1}{1020}$, we see that for a large enough $D\left(\geq \frac{T^{2/3}}{200}\right)$ this result gives us that privacy is not possible (roughly) for $\epsilon \leq 896 \cdot T^{4/3}$ and $\delta \leq \frac{1}{96T^{2/3}}$. Notably, we get the same flavor of result as obtained in the lower bound construction of \citet{altschuler2022privacy}. Hence, it would be interesting to see if a matching upper bound is attainable for Projected ZO-GD.

\section{Conclusion and Open Problems}

In this work, we demonstrated that projected (stochastic) ZO-gradient descent cannot ensure differential privacy without incorporating additional (additive) noise. While our findings apply to a broad class of zeroth-order oracles, the algorithmic framework we used is limited to the class of projected SGD. Potential future directions for this problem include:

\begin{enumerate}
    \item Can we prove Theorem~\ref{multidim:varinit} for other function classes (e.g., strongly convex loss functions)?
    \item Can we extend our (lower-bound) analysis beyond projected (stochastic) ZO-GD?
    \item Although zeroth-order estimators do not offer inherent privacy on their own, can they amplify the privacy of other (additive) noisy methods?
\end{enumerate}
The answer to any of these questions will help understand the limitations and potential of zeroth-order optimization for private optimization.

\section*{Acknowledgments}
The work of MR and DG was supported by a gift from Meta, a gift from Google, and a gift from Amazon. VS was supported by NSF CAREER Award CCF-2239265 and
an Amazon Research Award. The work was partly done when VS was visiting the Simons Institute in Berkeley.

\bibliographystyle{unsrtnat}
\bibliography{sample}

\begin{thebibliography}{50}
\providecommand{\natexlab}[1]{#1}
\providecommand{\url}[1]{\texttt{#1}}
\expandafter\ifx\csname urlstyle\endcsname\relax
  \providecommand{\doi}[1]{doi: #1}\else
  \providecommand{\doi}{doi: \begingroup \urlstyle{rm}\Url}\fi

\bibitem[Tang et~al.(2024)Tang, Panda, Nasr, Mahloujifar, and Mittal]{tang2024privatefinetuninglargelanguage}
Xinyu Tang, Ashwinee Panda, Milad Nasr, Saeed Mahloujifar, and Prateek Mittal.
\newblock Private fine-tuning of large language models with zeroth-order optimization, 2024.
\newblock URL \url{https://arxiv.org/abs/2401.04343}.

\bibitem[Zhang et~al.(2024{\natexlab{a}})Zhang, Li, Thekumparampil, Oh, and He]{zhang2024dpzero}
Liang Zhang, Bingcong Li, Kiran~Koshy Thekumparampil, Sewoong Oh, and Niao He.
\newblock {DPZero}: Private fine-tuning of language models without backpropagation.
\newblock In \emph{International Conference on Machine Learning}. PMLR, 2024{\natexlab{a}}.

\bibitem[Malladi et~al.(2023)Malladi, Gao, Nichani, Damian, Lee, Chen, and Arora]{malladi2023mezo}
Sadhika Malladi, Tianyu Gao, Eshaan Nichani, Alex Damian, Jason~D Lee, Danqi Chen, and Sanjeev Arora.
\newblock Fine-tuning language models with just forward passes.
\newblock \emph{Advances in Neural Information Processing Systems}, 36:\penalty0 53038--53075, 2023.

\bibitem[Spall(1992)]{spsa}
J.C. Spall.
\newblock Multivariate stochastic approximation using a simultaneous perturbation gradient approximation.
\newblock \emph{IEEE Transactions on Automatic Control}, 37\penalty0 (3):\penalty0 332--341, 1992.
\newblock \doi{10.1109/9.119632}.

\bibitem[Zhang et~al.(2024{\natexlab{b}})Zhang, Li, Hong, Li, Zhang, Zheng, Chen, Lee, Yin, Hong, Wang, Liu, and Chen]{zhang2024revisitingzerothorderoptimizationmemoryefficient}
Yihua Zhang, Pingzhi Li, Junyuan Hong, Jiaxiang Li, Yimeng Zhang, Wenqing Zheng, Pin-Yu Chen, Jason~D. Lee, Wotao Yin, Mingyi Hong, Zhangyang Wang, Sijia Liu, and Tianlong Chen.
\newblock Revisiting zeroth-order optimization for memory-efficient llm fine-tuning: A benchmark, 2024{\natexlab{b}}.
\newblock URL \url{https://arxiv.org/abs/2402.11592}.

\bibitem[Guo et~al.(2024)Guo, Long, Zeng, Liu, Yang, Ran, Gardner, Bastani, Sa, Yu, Chen, and Xu]{guo2024zerothorderfinetuningllmsextreme}
Wentao Guo, Jikai Long, Yimeng Zeng, Zirui Liu, Xinyu Yang, Yide Ran, Jacob~R. Gardner, Osbert Bastani, Christopher~De Sa, Xiaodong Yu, Beidi Chen, and Zhaozhuo Xu.
\newblock Zeroth-order fine-tuning of llms with extreme sparsity, 2024.
\newblock URL \url{https://arxiv.org/abs/2406.02913}.

\bibitem[Dwork et~al.(2010{\natexlab{a}})Dwork, Rothblum, and Vadhan]{DworkRothblumVadhan2010}
Cynthia Dwork, Guy~N. Rothblum, and Salil Vadhan.
\newblock Boosting and differential privacy.
\newblock In \emph{Proceedings of the 2010 IEEE 51st Annual Symposium on Foundations of Computer Science}, FOCS '10, page 51–60, USA, 2010{\natexlab{a}}. IEEE Computer Society.
\newblock ISBN 9780769542447.
\newblock \doi{10.1109/FOCS.2010.12}.
\newblock URL \url{https://doi.org/10.1109/FOCS.2010.12}.

\bibitem[Dwork et~al.(2010{\natexlab{b}})Dwork, Rothblum, and Vadhan]{boosting}
Cynthia Dwork, Guy~N. Rothblum, and Salil Vadhan.
\newblock Boosting and differential privacy.
\newblock In \emph{2010 IEEE 51st Annual Symposium on Foundations of Computer Science}, pages 51--60, 2010{\natexlab{b}}.
\newblock \doi{10.1109/FOCS.2010.12}.

\bibitem[Chaudhuri and Hsu(2011)]{pmlr-v19-chaudhuri11a}
Kamalika Chaudhuri and Daniel Hsu.
\newblock Sample complexity bounds for differentially private learning.
\newblock In Sham~M. Kakade and Ulrike von Luxburg, editors, \emph{Proceedings of the 24th Annual Conference on Learning Theory}, volume~19 of \emph{Proceedings of Machine Learning Research}, pages 155--186, Budapest, Hungary, 09--11 Jun 2011. PMLR.
\newblock URL \url{https://proceedings.mlr.press/v19/chaudhuri11a.html}.

\bibitem[Cohen et~al.(2023)Cohen, Lyu, Nelson, Sarl\'{o}s, and Stemmer]{thresholdfunctions}
Edith Cohen, Xin Lyu, Jelani Nelson, Tam\'{a}s Sarl\'{o}s, and Uri Stemmer.
\newblock Optimal differentially private learning of thresholds and quasi-concave optimization.
\newblock In \emph{Proceedings of the 55th Annual ACM Symposium on Theory of Computing}, STOC 2023, page 472–482, New York, NY, USA, 2023. Association for Computing Machinery.
\newblock ISBN 9781450399135.
\newblock \doi{10.1145/3564246.3585148}.
\newblock URL \url{https://doi.org/10.1145/3564246.3585148}.

\bibitem[Gonen et~al.(2019)Gonen, Hazan, and Moran]{NEURIPS2019_700fdb2b}
Alon Gonen, Elad Hazan, and Shay Moran.
\newblock Private learning implies online learning: An efficient reduction.
\newblock In H.~Wallach, H.~Larochelle, A.~Beygelzimer, F.~d\textquotesingle Alch\'{e}-Buc, E.~Fox, and R.~Garnett, editors, \emph{Advances in Neural Information Processing Systems}, volume~32. Curran Associates, Inc., 2019.
\newblock URL \url{https://proceedings.neurips.cc/paper_files/paper/2019/file/700fdb2ba62d4554dc268c65add4b16e-Paper.pdf}.

\bibitem[Block et~al.(2024)Block, Bun, Desai, Shetty, and Wu]{block2024oracleefficientdifferentiallyprivatelearning}
Adam Block, Mark Bun, Rathin Desai, Abhishek Shetty, and Steven Wu.
\newblock Oracle-efficient differentially private learning with public data, 2024.
\newblock URL \url{https://arxiv.org/abs/2402.09483}.

\bibitem[Chaudhuri et~al.(2011)Chaudhuri, Monteleoni, and Sarwate]{JMLR:v12:chaudhuri11a}
Kamalika Chaudhuri, Claire Monteleoni, and Anand~D. Sarwate.
\newblock Differentially private empirical risk minimization.
\newblock \emph{Journal of Machine Learning Research}, 12\penalty0 (29):\penalty0 1069--1109, 2011.
\newblock URL \url{http://jmlr.org/papers/v12/chaudhuri11a.html}.

\bibitem[Bassily et~al.(2014)Bassily, Smith, and Thakurta]{BassilySmith2014}
Raef Bassily, Adam Smith, and Abhradeep Thakurta.
\newblock Private empirical risk minimization: Efficient algorithms and tight error bounds.
\newblock In \emph{2014 IEEE 55th Annual Symposium on Foundations of Computer Science}, pages 464--473, 2014.
\newblock \doi{10.1109/FOCS.2014.56}.

\bibitem[Lowy et~al.(2024)Lowy, Ullman, and Wright]{pmlr-v235-lowy24b}
Andrew Lowy, Jonathan Ullman, and Stephen Wright.
\newblock How to make the gradients small privately: Improved rates for differentially private non-convex optimization.
\newblock In Ruslan Salakhutdinov, Zico Kolter, Katherine Heller, Adrian Weller, Nuria Oliver, Jonathan Scarlett, and Felix Berkenkamp, editors, \emph{Proceedings of the 41st International Conference on Machine Learning}, volume 235 of \emph{Proceedings of Machine Learning Research}, pages 32904--32923. PMLR, 21--27 Jul 2024.
\newblock URL \url{https://proceedings.mlr.press/v235/lowy24b.html}.

\bibitem[Arora et~al.(2023)Arora, Bassily, Gonz\'{a}lez, Guzm\'{a}n, Menart, and Ullah]{aroraprivacy}
Raman Arora, Raef Bassily, Tom\'{a}s Gonz\'{a}lez, Crist\'{o}bal Guzm\'{a}n, Michael Menart, and Enayat Ullah.
\newblock Faster rates of convergence to stationary points in differentially private optimization.
\newblock In \emph{Proceedings of the 40th International Conference on Machine Learning}, ICML'23. JMLR.org, 2023.

\bibitem[Feldman et~al.(2020)Feldman, Koren, and Talwar]{optlinear}
Vitaly Feldman, Tomer Koren, and Kunal Talwar.
\newblock Private stochastic convex optimization: optimal rates in linear time.
\newblock In \emph{Proceedings of the 52nd Annual ACM SIGACT Symposium on Theory of Computing}, STOC 2020, page 439–449, New York, NY, USA, 2020. Association for Computing Machinery.
\newblock ISBN 9781450369794.
\newblock \doi{10.1145/3357713.3384335}.
\newblock URL \url{https://doi.org/10.1145/3357713.3384335}.

\bibitem[Bassily et~al.(2019)Bassily, Feldman, Talwar, and Guha~Thakurta]{bassilyoptimal}
Raef Bassily, Vitaly Feldman, Kunal Talwar, and Abhradeep Guha~Thakurta.
\newblock Private stochastic convex optimization with optimal rates.
\newblock \emph{Advances in neural information processing systems}, 32, 2019.

\bibitem[Gopi et~al.(2022)Gopi, Lee, and Liu]{pmlr-v178-gopi22a}
Sivakanth Gopi, Yin~Tat Lee, and Daogao Liu.
\newblock Private convex optimization via exponential mechanism.
\newblock In Po-Ling Loh and Maxim Raginsky, editors, \emph{Proceedings of Thirty Fifth Conference on Learning Theory}, volume 178 of \emph{Proceedings of Machine Learning Research}, pages 1948--1989. PMLR, 02--05 Jul 2022.
\newblock URL \url{https://proceedings.mlr.press/v178/gopi22a.html}.

\bibitem[Altschuler and Talwar(2022)]{altschuler2022privacy}
Jason Altschuler and Kunal Talwar.
\newblock Privacy of noisy stochastic gradient descent: More iterations without more privacy loss.
\newblock In Alice~H. Oh, Alekh Agarwal, Danielle Belgrave, and Kyunghyun Cho, editors, \emph{Advances in Neural Information Processing Systems}, 2022.
\newblock URL \url{https://openreview.net/forum?id=pDUYkwrx__w}.

\bibitem[Carmon et~al.(2023)Carmon, Jambulapati, Jin, Lee, Liu, Sidford, and Tian]{resque}
Yair Carmon, Arun Jambulapati, Yujia Jin, Yin~Tat Lee, Daogao Liu, Aaron Sidford, and Kevin Tian.
\newblock Resqueing parallel and private stochastic convex optimization.
\newblock In \emph{2023 IEEE 64th Annual Symposium on Foundations of Computer Science (FOCS)}, pages 2031--2058, 2023.
\newblock \doi{10.1109/FOCS57990.2023.00124}.

\bibitem[Gopi et~al.(2023)Gopi, Lee, Liu, Shen, and Tian]{generalnorms}
Sivakanth Gopi, Yin~Tat Lee, Daogao Liu, Ruoqi Shen, and Kevin Tian.
\newblock Private convex optimization in general norms.
\newblock In \emph{Proceedings of the 2023 Annual ACM-SIAM Symposium on Discrete Algorithms (SODA)}, pages 5068--5089. SIAM, 2023.

\bibitem[Abadi et~al.(2016)Abadi, Chu, Goodfellow, McMahan, Mironov, Talwar, and Zhang]{abadipaper}
Martin Abadi, Andy Chu, Ian Goodfellow, H.~Brendan McMahan, Ilya Mironov, Kunal Talwar, and Li~Zhang.
\newblock Deep learning with differential privacy.
\newblock In \emph{Proceedings of the 2016 ACM SIGSAC Conference on Computer and Communications Security}, CCS '16, page 308–318, New York, NY, USA, 2016. Association for Computing Machinery.
\newblock ISBN 9781450341394.
\newblock \doi{10.1145/2976749.2978318}.
\newblock URL \url{https://doi.org/10.1145/2976749.2978318}.

\bibitem[Yousefpour et~al.(2022)Yousefpour, Shilov, Sablayrolles, Testuggine, Prasad, Malek, Nguyen, Ghosh, Bharadwaj, Zhao, Cormode, and Mironov]{opacus}
Ashkan Yousefpour, Igor Shilov, Alexandre Sablayrolles, Davide Testuggine, Karthik Prasad, Mani Malek, John Nguyen, Sayan Ghosh, Akash Bharadwaj, Jessica Zhao, Graham Cormode, and Ilya Mironov.
\newblock Opacus: User-friendly differential privacy library in pytorch, 2022.
\newblock URL \url{https://arxiv.org/abs/2109.12298}.

\bibitem[Erlingsson et~al.(2014)Erlingsson, Pihur, and Korolova]{10.1145/2660267.2660348}
\'{U}lfar Erlingsson, Vasyl Pihur, and Aleksandra Korolova.
\newblock Rappor: Randomized aggregatable privacy-preserving ordinal response.
\newblock In \emph{Proceedings of the 2014 ACM SIGSAC Conference on Computer and Communications Security}, CCS '14, page 1054–1067, New York, NY, USA, 2014. Association for Computing Machinery.
\newblock ISBN 9781450329576.
\newblock \doi{10.1145/2660267.2660348}.
\newblock URL \url{https://doi.org/10.1145/2660267.2660348}.

\bibitem[Ding et~al.(2017)Ding, Kulkarni, and Yekhanin]{NIPS2017_253614bb}
Bolin Ding, Janardhan Kulkarni, and Sergey Yekhanin.
\newblock Collecting telemetry data privately.
\newblock In I.~Guyon, U.~Von Luxburg, S.~Bengio, H.~Wallach, R.~Fergus, S.~Vishwanathan, and R.~Garnett, editors, \emph{Advances in Neural Information Processing Systems}, volume~30. Curran Associates, Inc., 2017.
\newblock URL \url{https://proceedings.neurips.cc/paper_files/paper/2017/file/253614bbac999b38b5b60cae531c4969-Paper.pdf}.

\bibitem[Rogers et~al.(2021)Rogers, Subramaniam, Peng, Durfee, Lee, Kancha, Sahay, and Ahammad]{Rogers_Subramaniam_Peng_Durfee_Lee_Kancha_Sahay_Ahammad_2021}
Ryan Rogers, Subbu Subramaniam, Sean Peng, David Durfee, Seunghyun Lee, Santosh~Kumar Kancha, Shraddha Sahay, and Parvez Ahammad.
\newblock Linkedin’s audience engagements api: A privacy preserving data analytics system at scale.
\newblock \emph{Journal of Privacy and Confidentiality}, 11\penalty0 (3), Dec. 2021.
\newblock \doi{10.29012/jpc.782}.
\newblock URL \url{https://journalprivacyconfidentiality.org/index.php/jpc/article/view/782}.

\bibitem[Wibisono et~al.(2012)Wibisono, Wainwright, Jordan, and Duchi]{NIPS2012_e555ebe0}
Andre Wibisono, Martin~J Wainwright, Michael Jordan, and John~C Duchi.
\newblock Finite sample convergence rates of zero-order stochastic optimization methods.
\newblock In F.~Pereira, C.J. Burges, L.~Bottou, and K.Q. Weinberger, editors, \emph{Advances in Neural Information Processing Systems}, volume~25. Curran Associates, Inc., 2012.
\newblock URL \url{https://proceedings.neurips.cc/paper_files/paper/2012/file/e555ebe0ce426f7f9b2bef0706315e0c-Paper.pdf}.

\bibitem[Nesterov and Spokoiny(2015)]{Nesterov2015RandomGM}
Yurii Nesterov and Vladimir~G. Spokoiny.
\newblock Random gradient-free minimization of convex functions.
\newblock \emph{Foundations of Computational Mathematics}, 17:\penalty0 527 -- 566, 2015.
\newblock URL \url{https://api.semanticscholar.org/CorpusID:2147817}.

\bibitem[Shamir(2013)]{pmlr-v28-shamir13}
Ohad Shamir.
\newblock On the complexity of bandit and derivative-free stochastic convex optimization.
\newblock In Shai Shalev-Shwartz and Ingo Steinwart, editors, \emph{Proceedings of the 26th Annual Conference on Learning Theory}, volume~30 of \emph{Proceedings of Machine Learning Research}, pages 3--24, Princeton, NJ, USA, 12--14 Jun 2013. PMLR.
\newblock URL \url{https://proceedings.mlr.press/v30/Shamir13.html}.

\bibitem[Duchi et~al.(2015)Duchi, Jordan, Wainwright, and Wibisono]{duchioptzero}
John~C. Duchi, Michael~I. Jordan, Martin~J. Wainwright, and Andre Wibisono.
\newblock Optimal rates for zero-order convex optimization: The power of two function evaluations.
\newblock \emph{IEEE Transactions on Information Theory}, 61\penalty0 (5):\penalty0 2788--2806, 2015.
\newblock \doi{10.1109/TIT.2015.2409256}.

\bibitem[Yue et~al.(2023)Yue, Yang, Fang, and Lin]{pmlr-v195-yue23b}
Pengyun Yue, Long Yang, Cong Fang, and Zhouchen Lin.
\newblock Zeroth-order optimization with weak dimension dependency.
\newblock In Gergely Neu and Lorenzo Rosasco, editors, \emph{Proceedings of Thirty Sixth Conference on Learning Theory}, volume 195 of \emph{Proceedings of Machine Learning Research}, pages 4429--4472. PMLR, 12--15 Jul 2023.
\newblock URL \url{https://proceedings.mlr.press/v195/yue23b.html}.

\bibitem[Hu et~al.(2022)Hu, yelong shen, Wallis, Allen-Zhu, Li, Wang, Wang, and Chen]{hu2022lora}
Edward~J Hu, yelong shen, Phillip Wallis, Zeyuan Allen-Zhu, Yuanzhi Li, Shean Wang, Lu~Wang, and Weizhu Chen.
\newblock Lo{RA}: Low-rank adaptation of large language models.
\newblock In \emph{International Conference on Learning Representations}, 2022.
\newblock URL \url{https://openreview.net/forum?id=nZeVKeeFYf9}.

\bibitem[Kulkarni et~al.(2021)Kulkarni, Lee, and Liu]{NEURIPS2021_211c1e0b}
Janardhan Kulkarni, Yin~Tat Lee, and Daogao Liu.
\newblock Private non-smooth erm and sco in subquadratic steps.
\newblock In M.~Ranzato, A.~Beygelzimer, Y.~Dauphin, P.S. Liang, and J.~Wortman Vaughan, editors, \emph{Advances in Neural Information Processing Systems}, volume~34, pages 4053--4064. Curran Associates, Inc., 2021.
\newblock URL \url{https://proceedings.neurips.cc/paper_files/paper/2021/file/211c1e0b83b9c69fa9c4bdede203c1e3-Paper.pdf}.

\bibitem[Asi et~al.(2021)Asi, Feldman, Koren, and Talwar]{pmlr-v139-asi21b}
Hilal Asi, Vitaly Feldman, Tomer Koren, and Kunal Talwar.
\newblock Private stochastic convex optimization: Optimal rates in l1 geometry.
\newblock In Marina Meila and Tong Zhang, editors, \emph{Proceedings of the 38th International Conference on Machine Learning}, volume 139 of \emph{Proceedings of Machine Learning Research}, pages 393--403. PMLR, 18--24 Jul 2021.
\newblock URL \url{https://proceedings.mlr.press/v139/asi21b.html}.

\bibitem[Zhang et~al.(2024{\natexlab{c}})Zhang, Tran, and Cutkosky]{zhang2024private}
Qinzi Zhang, Hoang Tran, and Ashok Cutkosky.
\newblock Private zeroth-order nonsmooth nonconvex optimization.
\newblock In \emph{The Twelfth International Conference on Learning Representations}, 2024{\natexlab{c}}.
\newblock URL \url{https://openreview.net/forum?id=IzqZbNMZ0M}.

\bibitem[Shokri et~al.(2017)Shokri, Stronati, Song, and Shmatikov]{shokri2017membership}
Reza Shokri, Marco Stronati, Congzheng Song, and Vitaly Shmatikov.
\newblock Membership inference attacks against machine learning models.
\newblock In \emph{2017 IEEE symposium on security and privacy (SP)}, pages 3--18. IEEE, 2017.

\bibitem[Dwork et~al.(2006)Dwork, McSherry, Nissim, and Smith]{originalDP}
Cynthia Dwork, Frank McSherry, Kobbi Nissim, and Adam Smith.
\newblock Calibrating noise to sensitivity in private data analysis.
\newblock In Shai Halevi and Tal Rabin, editors, \emph{Theory of Cryptography}, pages 265--284, Berlin, Heidelberg, 2006. Springer Berlin Heidelberg.
\newblock ISBN 978-3-540-32732-5.

\bibitem[Flaxman et~al.(2005)Flaxman, Kalai, and McMahan]{singlepoint}
Abraham~D. Flaxman, Adam~Tauman Kalai, and H.~Brendan McMahan.
\newblock Online convex optimization in the bandit setting: gradient descent without a gradient.
\newblock In \emph{Proceedings of the Sixteenth Annual ACM-SIAM Symposium on Discrete Algorithms}, SODA '05, page 385–394, USA, 2005. Society for Industrial and Applied Mathematics.
\newblock ISBN 0898715857.

\bibitem[Nemirovsky and Yudin(1983)]{nemrosyud}
A.~S. Nemirovsky and D.~B. Yudin.
\newblock Problem complexity and method efficiency in optimization.
\newblock \emph{Wiley}, 1983.

\bibitem[Charles et~al.(2024)Charles, Ganesh, McKenna, McMahan, Mitchell, Pillutla, and Rush]{charles2024finetuninglargelanguagemodels}
Zachary Charles, Arun Ganesh, Ryan McKenna, H.~Brendan McMahan, Nicole Mitchell, Krishna Pillutla, and Keith Rush.
\newblock Fine-tuning large language models with user-level differential privacy, 2024.
\newblock URL \url{https://arxiv.org/abs/2407.07737}.

\bibitem[Lowy and Razaviyayn(2023)]{lowy2023privatefederatedlearningtrusted}
Andrew Lowy and Meisam Razaviyayn.
\newblock Private federated learning without a trusted server: Optimal algorithms for convex losses, 2023.
\newblock URL \url{https://arxiv.org/abs/2106.09779}.

\bibitem[Lowy et~al.(2023)Lowy, Ghafelebashi, and Razaviyayn]{pmlr-v206-lowy23a}
Andrew Lowy, Ali Ghafelebashi, and Meisam Razaviyayn.
\newblock Private non-convex federated learning without a trusted server.
\newblock In Francisco Ruiz, Jennifer Dy, and Jan-Willem van~de Meent, editors, \emph{Proceedings of The 26th International Conference on Artificial Intelligence and Statistics}, volume 206 of \emph{Proceedings of Machine Learning Research}, pages 5749--5786. PMLR, 25--27 Apr 2023.
\newblock URL \url{https://proceedings.mlr.press/v206/lowy23a.html}.

\bibitem[Aldaghri et~al.(2023)Aldaghri, Mahdavifar, and Beirami]{fedother}
Nasser Aldaghri, Hessam Mahdavifar, and Ahmad Beirami.
\newblock Federated learning with heterogeneous differential privacy, 2023.
\newblock URL \url{https://arxiv.org/abs/2110.15252}.

\bibitem[Gao et~al.(2024)Gao, Lowy, Zhou, and Wright]{gao2024privateheterogeneousfederatedlearning}
Changyu Gao, Andrew Lowy, Xingyu Zhou, and Stephen~J. Wright.
\newblock Private heterogeneous federated learning without a trusted server revisited: Error-optimal and communication-efficient algorithms for convex losses, 2024.
\newblock URL \url{https://arxiv.org/abs/2407.09690}.

\bibitem[Chourasia et~al.(2021)Chourasia, Ye, and Shokri]{NEURIPS2021_7c6c1a7b}
Rishav Chourasia, Jiayuan Ye, and Reza Shokri.
\newblock Differential privacy dynamics of langevin diffusion and noisy gradient descent.
\newblock In M.~Ranzato, A.~Beygelzimer, Y.~Dauphin, P.S. Liang, and J.~Wortman Vaughan, editors, \emph{Advances in Neural Information Processing Systems}, volume~34, pages 14771--14781. Curran Associates, Inc., 2021.
\newblock URL \url{https://proceedings.neurips.cc/paper_files/paper/2021/file/7c6c1a7bfde175bed616b39247ccace1-Paper.pdf}.

\bibitem[Balasubramanian and Ghadimi(2022)]{Balasubramanian2022-zu}
Krishnakumar Balasubramanian and Saeed Ghadimi.
\newblock {Zeroth-Order} nonconvex stochastic optimization: Handling constraints, high dimensionality, and saddle points.
\newblock \emph{Foundations of Computational Mathematics}, 22\penalty0 (1):\penalty0 35--76, February 2022.

\bibitem[He et~al.(2015)He, Zhang, Ren, and Sun]{7410480}
Kaiming He, Xiangyu Zhang, Shaoqing Ren, and Jian Sun.
\newblock Delving deep into rectifiers: Surpassing human-level performance on imagenet classification.
\newblock In \emph{2015 IEEE International Conference on Computer Vision (ICCV)}, pages 1026--1034, 2015.
\newblock \doi{10.1109/ICCV.2015.123}.

\bibitem[Dagan and Shamir(2018)]{daganshamir}
Yuval Dagan and Ohad Shamir.
\newblock Detecting correlations with little memory and communication.
\newblock In Sébastien Bubeck, Vianney Perchet, and Philippe Rigollet, editors, \emph{Proceedings of the 31st Conference On Learning Theory}, volume~75 of \emph{Proceedings of Machine Learning Research}, pages 1145--1198. PMLR, 06--09 Jul 2018.
\newblock URL \url{https://proceedings.mlr.press/v75/dagan18a.html}.

\bibitem[Paley and Zygmund(1932)]{paley1932some}
Raymond~EAC Paley and Antoni Zygmund.
\newblock On some series of functions,(3).
\newblock In \emph{Mathematical Proceedings of the Cambridge Philosophical Society}, volume~28, pages 190--205. Cambridge University Press, 1932.

\end{thebibliography}

\clearpage
\appendix
\onecolumn

\section{Useful Inequalities}

\begin{lemma}[Gaussian Concentration \citep{daganshamir}]
    \label{lem:gauconc}
    Let W be a random variable (RV) distributed normally with mean 0 and variance $\sigma^2$. Then, for any $w > 0$, 
    \begin{align*}
        \bb{P}\left[W \geq w\right] \leq \frac{\sigma e^{-w^2/2\sigma^2}}{w\sqrt{2\pi}}.
    \end{align*}
\end{lemma}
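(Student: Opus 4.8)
The plan is to reuse the separating-pair strategy behind Theorems~\ref{lemma:non-privacyofZOestimators} and \ref{multidim:fixedinit}, but now with a \emph{linear} loss aligned with the distinguished coordinate $i^*$ of the $C_s$-AC oracle, and to replace the crude ``frozen trajectory vs.\ continuous trajectory'' dichotomy by a quantitative comparison: on the $\calX$ run the $i^*$-th coordinate of the final iterate is a truncated Gaussian concentrated near $0$, while on the $\calX'$ run the drift forced by Properties~2--3 of Definition~\ref{def:badoracle}, together with the second-moment bound of Property~4, pushes that coordinate away from $0$ with constant probability. Concretely, fix $i^*$ from Definition~\ref{def:badoracle} and take $\calL(w;\calX)=\tfrac{L}{n}\sum_{i=1}^{n}\inner{x_i}{w}$ with $x_i\in\Real^d$, $\twonorm{x_i}\le 1$; this is linear and $L$-Lipschitz on $[-D,D]^d$. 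Pick neighbours with $x_1=\dots=x_{n-1}=\mathbf{0}_d$ and $x_n=\mathbf{0}_d$ versus $x_n'=-\mathbf{e}_{i^*}$, so that $\calL(\cdot;\calX)\equiv 0$ and $\calL(\cdot;\calX')(w)=\inner{-\tfrac{L}{n}\mathbf{e}_{i^*}}{w}$, which lies in the class $\calF$ of Definition~\ref{def:badoracle} with slope $g=-L/n\in[-L,0]$.

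For the $\calX$ run, $\calL(\cdot;\calX)\equiv 0\in\calF$ has $\{\nabla\calL(\cdot;\calX)\}_{i^*}=0$, so Property~1 forces every oracle output to be $\mathbf{0}_d$ almost surely; hence $W^{\calX}_t=\Pi_{[-D,D]^d}(w_0)$ for all $t$, and $\{W^{\calX}_T\}_{i^*}=\Pi_{[-D,D]}(\{w_0\}_{i^*})$ with $\{w_0\}_{i^*}\sim\calN(0,\sigma^2)$. Thus for every $c\in(0,D]$, $\bb{P}[\{W^{\calX}_T\}_{i^*}\ge c]=\bb{P}[\{w_0\}_{i^*}\ge c]\le \tfrac{\sigma}{c\sqrt{2\pi}}e^{-c^2/2\sigma^2}$ by Lemma~\ref{lem:gauconc}. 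For the $\calX'$ run, $g\ne 0$, so Property~2 gives $\{U_t\}_{i^*}<0$ almost surely for every update $U_t$. Hence the $i^*$-th coordinate of the iterate is non-decreasing and the box projection can only clamp it at $+D$; unrolling on the event $\{\{w_0\}_{i^*}\ge 0\}$ (probability $\tfrac12$, independent of the oracle draws) gives $\{W^{\calX'}_T\}_{i^*}=\min\!\big(D,\ \{w_0\}_{i^*}+X_T\big)\ge\min(D,X_T)$, where $X_T:=-\eta\sum_{t=1}^{T}\{U_t\}_{i^*}\ge 0$. Property~3 applied along the trajectory gives $\Ex{X_T}\ge \eta TL/n$, and Property~4 gives $\Ex{X_T^2}\le C_s\Ex{X_T}^2$, so the second-moment (Paley--Zygmund) inequality yields $\bb{P}[X_T\ge\tfrac12\Ex{X_T}]\ge\tfrac{1}{4C_s}$, hence $\bb{P}[X_T\ge \tfrac{\eta TL}{2n}]\ge\tfrac{1}{4C_s}$. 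Taking $c:=\min\!\big(D,\tfrac{\eta TL}{2n}\big)\in(0,D]$ and intersecting with the independent event $\{\{w_0\}_{i^*}\ge 0\}$ gives $\bb{P}[\{W^{\calX'}_T\}_{i^*}\ge c]\ge\tfrac{1}{8C_s}$.

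To finish, apply Definition~\ref{def:dp} to the set $O=\{w:\{w\}_{i^*}\ge c\}$: any $(\varepsilon,\delta)$-DP guarantee would force $\tfrac{1}{8C_s}\le e^{\varepsilon}\cdot\tfrac{\sigma}{c\sqrt{2\pi}}e^{-c^2/2\sigma^2}+\delta$. Since $D>\tfrac{\eta L}{2n}$ and $T\ge 1$ we have $c\ge\tfrac{\eta L}{2n}$ and $c^2\ge\min\!\big(D^2,\tfrac{\eta^2 L^2 T^{4/3}}{4n^2}\big)$ (using $T^2\ge T^{4/3}$). Plugging these in, the stated ranges $\delta\le\tfrac{1}{16C_s}\max\{T^{-2/3},\tfrac{\eta L}{2nD}\}$ (which in particular is $\le\tfrac{1}{16C_s}$) and $\varepsilon\le\min\{\tfrac{\eta^2L^2T^{4/3}}{8n^2\sigma^2},\tfrac{D^2}{2\sigma^2}\}+\ln\tfrac{\sqrt{2\pi}L\eta}{64C_sn\sigma}$ make the right-hand side strictly below $\tfrac{1}{8C_s}$ --- the factor-of-two gaps in the constants ($64$ vs.\ $16$ in the $\ln$ term, $\tfrac{1}{16C_s}$ vs.\ $\tfrac{1}{8C_s}$ in the $\delta$ budget) leave exactly the slack needed for a strict inequality. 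This contradicts $(\varepsilon,\delta)$-DP, so no such guarantee can hold.

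The step I expect to be the main obstacle is justifying the use of Property~4 (and the conditional use of Property~3) along the \emph{adaptive} trajectory: Property~4 is stated for a fixed tuple of query points $\{w_1,\dots,w_N\}$, whereas in Algorithm~\ref{alg:proj_zo_sgd} the point $w_{t-1}$ depends on all earlier oracle outputs. My intended fix is to note that on a linear loss the AC-oracle's output distribution depends on the query point only through the constant gradient $g\mathbf{e}_{i^*}$, so the updates $U_1,\dots,U_T$ can be coupled with an i.i.d.\ sequence drawn at a single fixed point, to which Property~4 applies directly for the sum; carrying this coupling through the coordinatewise projection bookkeeping and verifying the identity $\{W^{\calX'}_t\}_{i^*}=\min(D,\{w_0\}_{i^*}+X_t)$ rigorously by induction on the event $\{\{w_0\}_{i^*}\ge 0\}$ is where the care lies. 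The remaining Paley--Zygmund and Gaussian-tail estimates are routine.
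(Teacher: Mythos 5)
Your proposal does not prove the statement in question. The statement is Lemma~\ref{lem:gauconc}, the standard Gaussian tail (Mills-ratio) bound $\bb{P}[W\ge w]\le \frac{\sigma e^{-w^2/2\sigma^2}}{w\sqrt{2\pi}}$ for $W\sim\calN(0,\sigma^2)$. What you have written is instead an argument for Theorem~\ref{multidim:varinit} (non-privacy of projected ZO-GD with Gaussian initialization under a $C_s$-AC oracle); indeed, your text explicitly \emph{invokes} Lemma~\ref{lem:gauconc} as an ingredient (``$\bb{P}[\{w_0\}_{i^*}\ge c]\le \tfrac{\sigma}{c\sqrt{2\pi}}e^{-c^2/2\sigma^2}$ by Lemma~\ref{lem:gauconc}''), so it cannot serve as a proof of that lemma without circularity. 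Nothing in the proposal bounds a Gaussian tail from first principles.

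For the record, the lemma itself is a one-line computation (the paper simply cites it): for $w>0$,
\begin{align*}
\bb{P}[W\ge w]=\int_w^\infty \frac{1}{\sigma\sqrt{2\pi}}e^{-t^2/2\sigma^2}\,dt
\le \int_w^\infty \frac{t}{w}\cdot\frac{1}{\sigma\sqrt{2\pi}}e^{-t^2/2\sigma^2}\,dt
=\frac{\sigma}{w\sqrt{2\pi}}e^{-w^2/2\sigma^2},
\end{align*}
using $t/w\ge 1$ on the domain of integration and then evaluating the resulting elementary integral. If your intent was to prove Theorem~\ref{multidim:varinit}, you should compare your argument against Appendix~\ref{app:multidim:varinit} instead; but as a proof of the stated lemma it is a non sequitur.
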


\begin{lemma}[\citep{paley1932some}]
    \label{lem:pz}
    Let Z be a non-negative random variable (RV) and let $\alpha \in [0, 1]$, then
    \begin{align*}
        \bb{P}\left[Z \geq \alpha\Ex{Z}\right] \geq (1 - \alpha)^2\frac{(\Ex{Z})^2}{\Ex{Z^2}}.
    \end{align*}
\end{lemma}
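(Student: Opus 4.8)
\textbf{Proof plan for Lemma~\ref{lem:pz} (Paley--Zygmund).}

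The plan is to prove the standard one-sided inequality: for a non-negative random variable $Z$ with finite second moment and $\alpha \in [0,1]$,
\[
\bb{P}\left[Z \geq \alpha \Ex{Z}\right] \geq (1-\alpha)^2 \frac{(\Ex{Z})^2}{\Ex{Z^2}}.
\]
First I would dispose of the degenerate case $\Ex{Z} = 0$: since $Z \geq 0$, this forces $Z = 0$ almost surely, and then $\Ex{Z^2} = 0$ as well, so the right-hand side is interpreted as $0$ (or the statement is vacuous), and the inequality holds trivially. So assume $\Ex{Z} > 0$, which also gives $\Ex{Z^2} > 0$.

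The core step is a truncation/splitting argument on the expectation of $Z$. Write
\[
\Ex{Z} = \Ex{Z \cdot \ind\{Z < \alpha \Ex{Z}\}} + \Ex{Z \cdot \ind\{Z \geq \alpha \Ex{Z}\}}.
\]
On the first event, $Z < \alpha \Ex{Z}$, so $\Ex{Z \cdot \ind\{Z < \alpha \Ex{Z}\}} \leq \alpha \Ex{Z} \cdot \bb{P}[Z < \alpha\Ex{Z}] \leq \alpha \Ex{Z}$. For the second term, I apply the Cauchy--Schwarz inequality:
\[
\Ex{Z \cdot \ind\{Z \geq \alpha \Ex{Z}\}} \leq \sqrt{\Ex{Z^2}} \cdot \sqrt{\Ex{\ind\{Z \geq \alpha\Ex{Z}\}^2}} = \sqrt{\Ex{Z^2}}\cdot\sqrt{\bb{P}[Z \geq \alpha \Ex{Z}]}.
\]
Combining the two bounds gives $\Ex{Z} \leq \alpha \Ex{Z} + \sqrt{\Ex{Z^2}}\cdot\sqrt{\bb{P}[Z \geq \alpha \Ex{Z}]}$, hence $(1-\alpha)\Ex{Z} \leq \sqrt{\Ex{Z^2}}\cdot\sqrt{\bb{P}[Z \geq \alpha\Ex{Z}]}$. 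Since both sides are non-negative, squaring and rearranging yields $\bb{P}[Z \geq \alpha\Ex{Z}] \geq (1-\alpha)^2 (\Ex{Z})^2 / \Ex{Z^2}$, as desired.

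There is no real obstacle here; this is a classical inequality and the only subtlety is handling the degenerate case cleanly and making sure the indicator-variable manipulation in Cauchy--Schwarz is stated correctly (using $\ind\{\cdot\}^2 = \ind\{\cdot\}$). Since the paper cites \citep{paley1932some} for this lemma and uses it only as a black-box tool, I would expect the authors' proof (if any is given) to be essentially this two-line splitting argument, possibly stated even more tersely.
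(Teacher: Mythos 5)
Your proof is correct and is the standard splitting-plus-Cauchy--Schwarz argument for the Paley--Zygmund inequality; the degenerate case $\Ex{Z}=0$ is handled properly and the squaring step is justified since both sides are non-negative. The paper itself gives no proof of this lemma --- it is stated as a known result with a citation to Paley and Zygmund --- so there is nothing to compare against, and your argument is exactly the proof one would supply.
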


\section{Proofs and Discussions of Section~\ref{sec:privacyzosgd}}
\subsection{Proof of Lemma~\ref{estimator:multiplicative}}
\label{app:estimatormultiplicative}
\begin{proof}
    By the definition of SPSA estimator for $f(w)$, $\mu > 0$ and ${Z} \sim \calN(0, I_d)$, we have
    \begin{align*}
        SPSA(f, w) &= \frac{f(w + \mu {Z}) - f(w - \mu {Z})}{2\mu} {Z}.
    \end{align*}
    If $f(w) = 0$ for all $w \in \Real^d$, then $SPSA(f, w) = \mathbf{0}_d$ satisfying property 1. Moreover, for the function $f(w) = \frac{a}{2}\twonorm{w}^2$ for $a > 0$, we have:
    \begin{align*}
        SPSA(f, w) &= \frac{a}{2}\frac{\twonorm{w + \mu {Z}}^2 - \twonorm{w - \mu {Z}}^2}{2\mu} {Z} \\
        &= \frac{a}{2} \frac{\paren{\twonorm{w}^2 + \twonorm{\mu {Z}}^2 + 2\mu w^T{Z}} - \paren{\twonorm{w}^2 + \twonorm{\mu {Z}}^2 - 2\mu w^T{Z}}}{2\mu} {Z} \\
        &= a{Z}{Z}^Tw.
    \end{align*}
    Then for any $c \in \bb{R}^d$, consider its first index $\{c\}_1$, then the event $a{Z}{Z}^Tw = c$ implies that $\{a{Z}{Z}^Tw\}_1 = \{c\}_1$ i.e. $\bb{P}\left[a{Z}{Z}^Tw = c\right] \leq \bb{P}\left[\{a{Z}{Z}^Tw\}_1 = \{c\}_1\right]$. Now, we can write $\{a{Z}{Z}^Tw\}_1 = aw_1\{Z\}_1^2 + a\sum_{i=2}^d w_i\{Z\}_i\{Z\}_1$. Since $Z \sim \calN(0, I_d)$, it implies that $\{a{Z}{Z}^Tw\}_1$ is a non-constant polynomial in $\{Z\}_1, ..., \{Z\}_d$ which means that $\{a{Z}{Z}^Tw\}_1$ is continuous, implying that $\bb{P}\left[\{a{Z}{Z}^Tw\}_1 = \{c\}_1\right] = 0$ for any $\{c\}_1 \in \Real$, implying that $\bb{P}\left[a{Z}{Z}^Tw = c\right] = 0$. Hence, $SPSA(f, w)$ is a continuous RV satisfying property 2.
    
    Similarly, by definition of the FD estimator for $f(w)$, $\mu > 0$ and ${Z} \sim \calN(0, I_d)$, we get that
    \begin{align*}
        FD(f, w) &= \frac{f(w + \mu {Z}) - f(w)}{2\mu} {Z}
    \end{align*}
    If $f(w) = 0$ for all $w \in \Real^d$, then $FD(f, w) = \mathbf{0}_d$ satisfying property 1. If $f(w) = \frac{a}{2}\twonorm{w}^2$ for $a > 0$, we have that:
    \begin{align*}
        FD(f, w) &= \frac{a}{2}\frac{\twonorm{w + \mu {Z}}^2 - \twonorm{w}^2}{\mu} {Z} \\
        &= \frac{a}{2} \frac{\paren{\twonorm{w}^2 + \twonorm{\mu {Z}}^2 + 2\mu w^T{Z}} - \twonorm{w}^2}{\mu} {Z} \\
        &= \frac{a}{2}\paren{\mu\twonorm{{Z}}^2 + 2w^T{Z}} {Z}.
    \end{align*}
    Considering a similar argument as the SPSA estimator, we evaluate $\{FD(f,w)\}_1$, which is equal to $\frac{a\mu}{2}\{Z\}_1^3 + \frac{a\mu}{2} \sum_{i=2}^d \{Z\}_i^2\{Z\}_1 + aw_1\{Z\}_1^2 + a\sum_{i=2}^dw_i\{Z\}_i\{Z\}_1$. Since $\{FD(f,w)\}_1$ is a non-constant polynomial in $\{Z\}_1, ..., \{Z\}_d$, $\{FD(f,w)\}_1$ is continuous.
    Hence, $FD(f, w)$ is a continuous RV satisfying property 2.

    Similarly, by definition of the SP estimator for $f(w)$, $\mu > 0$ and ${Z} \sim \calN(0, I_d)$, we get that
    \begin{align*}
        SP(f, w) &= \frac{d}{\mu}f(w + \mu {Z})Z
    \end{align*}
    If $f(w) = 0$ for all $w \in \Real^d$, then $FD(f, w) = \mathbf{0}_d$ satisfying property 1. If $f(w) = \frac{a}{2}\twonorm{w}^2$ for $a > 0$, we have that:
    \begin{align*}
        SP(f, w) &= \frac{d}{\mu}f(w + \mu {Z})Z \\
        &= \frac{ad}{2\mu}\twonorm{w + \mu Z}^2{Z} \\
        &= \frac{a}{2}\paren{\twonorm{w}^2 + \mu^2\twonorm{{Z}}^2 + 2w^TZ} {Z}.
    \end{align*}
    Considering similar arguments as the SPSA and FD estimator, we see that $\{SP(f,w)\}_1$ is a non-constant polynomial in $\{Z\}_1, ..., \{Z\}_d$ which implies that $\{SP(f,w)\}_1$ is continuous. Hence, $SP(f, w)$ is a continuous RV satisfying property 2.
    
    For the mean extensions of the given oracles, we use the definition of the mean oracle
    \begin{align*}
        M^\calE(f, w, m) = \frac{1}{m} \sum_{i=1}^m U_i,
    \end{align*}
    where $U_i$ drawn i.i.d. from $\calE(f, w)$.
    Each $U_i$ is $\mathbf{0}_d$ when $f(w) = 0$ for all $w \in \Real^d$. Then, we get that for $f(w) = 0$ for all $w \in \Real^d$, $M^\calE(f, w, m) = \mathbf{0}_d$, satisfying property 1. In the other case, since $\calE_i(f, w)$ is continuous for all $i \in [m]$. We
    utilize the fact that if multiple continuous and independent random variables are added then their resultant addition is also a continuous random variable. Thus, the results extend for $M^{SPSA}$, $M^{FD}$, and $M^{SP}$.
\end{proof}

\subsection{Further Discussion on Zeroth Order Estimators}
Consider the estimator given by \citet{duchioptzero} for minimization of non-smooth functions.
\begin{definition}[\citet{duchioptzero}]
    \label{duchi:oracle}
    If ${Z_1}, {Z_1} \sim \calN(0, I_d)$, then the estimator is given by
    \begin{align*}
        G_{\mu_1, \mu_2}(f, w) = \frac{f(w + \mu_1{Z_1} + \mu_2{Z_2}) - f(w + \mu_1{Z_1})}{\mu_2}{Z_2}.
    \end{align*}
\end{definition}

\begin{lemma}
    The oracle defined in Definition~\ref{duchi:oracle} is a \zpn oracle.
\end{lemma}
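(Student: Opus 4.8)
The plan is to verify directly that $G_{\mu_1,\mu_2}$ satisfies the two defining properties of a \zpn oracle in Definition~\ref{def:multnoiseoracle}, following the same template as the proof of Lemma~\ref{estimator:multiplicative} for $SPSA$, $FD$, and $SP$. Property~1 is immediate: if $f(u)=0$ for every $u\in\Real^d$, then the finite difference $f(w+\mu_1 Z_1+\mu_2 Z_2)-f(w+\mu_1 Z_1)$ is identically zero, so $G_{\mu_1,\mu_2}(f,w)=\mathbf{0}_d$ with probability $1$.

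For Property~2, fix $a>0$, set $f(u)=\frac{a}{2}\twonorm{u}^2$, and fix $w\neq\mathbf{0}_d$. Expanding the squared norms (the $\twonorm{w+\mu_1 Z_1}^2$ terms cancel) yields
\begin{align*}
G_{\mu_1,\mu_2}(f,w)=\frac{a}{2}\paren{2\inner{w+\mu_1 Z_1}{Z_2}+\mu_2\twonorm{Z_2}^2}Z_2.
\end{align*}
Since $w\neq\mathbf{0}_d$, I would pick a coordinate $j$ with $w_j\neq0$ and look at the $j$-th entry $\{G_{\mu_1,\mu_2}(f,w)\}_j$, which is a polynomial in the $2d$ i.i.d.\ standard Gaussians $\{Z_1\}_1,\dots,\{Z_2\}_d$. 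A short bookkeeping check shows the monomial $\{Z_2\}_j^2$ occurs in it with coefficient exactly $a w_j\neq0$: the cross term $a\mu_1\inner{Z_1}{Z_2}\{Z_2\}_j$ contributes only mixed $Z_1$--$Z_2$ cubics, and $\frac{a\mu_2}{2}\twonorm{Z_2}^2\{Z_2\}_j$ contributes only cubics in $Z_2$, so nothing cancels $a w_j\{Z_2\}_j^2$. Hence $\{G_{\mu_1,\mu_2}(f,w)\}_j$ is a non-constant polynomial in a Gaussian vector, and (exactly as invoked in the $SPSA$/$FD$/$SP$ arguments) its zero level sets have Lebesgue measure zero, so $\bb{P}[\{G_{\mu_1,\mu_2}(f,w)\}_j=c]=0$ for every $c\in\Real$. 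Therefore $\bb{P}[G_{\mu_1,\mu_2}(f,w)=c]\le\bb{P}[\{G_{\mu_1,\mu_2}(f,w)\}_j=\{c\}_j]=0$ for every $c\in\Real^d$, i.e.\ $G_{\mu_1,\mu_2}(f,w)$ is a continuous probability measure, establishing Property~2.

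I do not expect any genuine obstacle here: the argument is essentially a line-for-line adaptation of Lemma~\ref{estimator:multiplicative}, and the only point requiring a moment of care is that one should work with a coordinate $j$ on which $w$ is nonzero (rather than the first coordinate by default) and confirm that the pure quadratic term $\{Z_2\}_j^2$ is not cancelled by the cubic terms arising from $\mu_1\inner{Z_1}{Z_2}$ and $\mu_2\twonorm{Z_2}^2$. Everything else reduces to the routine fact that a non-constant polynomial of a Gaussian vector is atomless, which is already used in the appendix.
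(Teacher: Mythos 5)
Your proof is correct and follows essentially the same route as the paper's: expand the squared norms, observe the resulting coordinate of the estimator is a non-constant polynomial in a Gaussian vector, and conclude atomlessness exactly as in Lemma~\ref{estimator:multiplicative}. As a minor remark, your algebra is actually tidier than the paper's (which slips and subtracts $\twonorm{w}^2$ rather than $\twonorm{w+\mu_1 Z_1}^2$), and you could dispense with choosing a coordinate $j$ with $w_j\neq 0$, since the cubic monomial $\tfrac{a\mu_2}{2}\{Z_2\}_j^3$ already makes $\{G_{\mu_1,\mu_2}(f,w)\}_j$ non-constant for every $j$ once $\mu_2>0$.
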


\begin{proof}
    Using identical arguments from the proof of Theorem~\ref{estimator:multiplicative}, we see that with $f(w) = 0$ for all $w \in \Real^d$, $G_{\mu_1, \mu_2}(f, w) = \mathbf{0}_d$. Calculating for $f(w) = \frac{A}{2}\twonorm{w}^2$, we get that
    \begin{align*}
        G_{\mu_1, \mu_2}(f, w) = &= \frac{A}{2}\frac{\twonorm{w + \mu_1 {Z_1} + \mu_2 {Z_2}}^2 - \twonorm{w + \mu_1 {Z_1}}^2}{\mu_2} {Z_2} \\
        &= \frac{A}{2} \frac{\paren{\twonorm{w}^2 + \twonorm{\mu_1 {Z_1} + \mu_2 {Z_2}}^2 + 2 w^T\paren{\mu_1 {Z_1} + \mu_2 {Z_2}}} - \twonorm{w}^2}{\mu_2} {Z_2} \\
        &= \frac{A}{2\mu_2}\paren{\twonorm{\mu_1 {Z_1} + \mu_2 {Z_2}}^2 + 2w^T\paren{\mu_1 {Z_1} + \mu_2 {Z_2}}} {Z_2}.
    \end{align*}
    Thus, using arguments similar to the proof of Lemma~\ref{estimator:multiplicative} $G_{\mu_1, \mu_2}(f, w)$ would have a continuous distribution, proving our claim.
\end{proof}

The above lemma also implies that $\c{M}^{G_{\mu_1, \mu_2}}$ would also be a continuous distribution. It is evident that almost every zeroth order estimator used in the literature (yet) satisfies the properties of \zpn oracles.

\section{Proofs of Section~\ref{sec:privacyzosgdunk}}

\subsection{Proof of Theorem~\ref{multidim:varinit}}
\label{app:multidim:varinit}

\begin{definition}{(Restated Definition~\ref{def:badoracle})}
    An update oracle $\calO$ is $C_s$-\ac if there exists an index $i^* \in [d]$ such that if we consider the function class $\c{F} = \left\{\inner{g\mathbf{e}_{i^*}}{w}: - L \leq g \leq 0\right\}$, then for any $h \in \c{F}$ and $\overline{w} \in \Real^d$, if $U \sim \calO(h, \overline{w})$ then,
    \begin{enumerate}
        \item $\{\nabla h(\overline{w})\}_{i^*} = 0$ implies $U = \mathbf{0}_d$ w.p. 1 i.e. $\bb{P}\left[U = \mathbf{0}_d\right] = 1$
        \item $\{\nabla h(\overline{w})\}_{i^*} \neq 0$ implies $\bb{P}\left[\{U\}_{i^*} < 0\right] = 1$ 
        \item $\bb{E}\left[\{U\}_{i^*}\right] \leq \{\nabla h(\overline{w})\}_{i^*}$
        \item For any set of $\{w_1, w_2, ..., w_N\} \subset \Real^d$, let $U_j \sim \calO(h, w_j)$ independently for all $j \in [N]$. Then $\Ex{\paren{\sum_{j=1}^N \{U_j\}_{i^*}}^2}\leq C_s\Ex{\sum_{j=1}^N \{U_j\}_{i^*}}^2$
    \end{enumerate}
\end{definition}

\begin{theorem}{(Restated Theorem~\ref{multidim:varinit})}
     Consider running T steps of Algorithm \ref{alg:proj_zo_sgd} using a $C_s$-\acs~oracle $\calO$, as defined in Definition~\ref{def:badoracle}, with $D > \frac{\eta L}{2n}$ and $\calR_{init}$ is $\calN(0, \sigma^2I)$. Assume that the algorithm only returns the final iterate. Then, there exists an L-Lipschitz linear loss function over the set $[-D, D]^d \subset \Real^d$ such that for any $T \geq 1$ the output of Algorithm \ref{alg:proj_zo_sgd} is not $(\varepsilon, \delta)$-differentially private for any $\varepsilon, \delta$ satisfying $\delta \leq \frac{1}{16C_s}\max\left\{\frac{1}{T^{2/3}}, \frac{\eta L}{2nD}\right\}$ and
    \begin{align*}
        \varepsilon \leq \min\left\{\frac{\eta^2L^2T^{4/3}}{8n^2\sigma^2}, \frac{D^2}{2\sigma^2}\right\} + \ln\paren{\frac{\sqrt{2\pi}L\eta}{64C_sn\sigma}}.
    \end{align*}
\end{theorem}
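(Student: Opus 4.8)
The plan is to adapt the construction from Theorem~\ref{multidim:fixedinit} to the randomly initialized setting by exploiting the \textbf{anti-concentration} properties of the AC oracle along a single coordinate $i^*$. As before, I would take the dataset $\calX = \{0_k, \dots, 0_k, 0_k\}$ and $\calX' = \{0_k, \dots, 0_k, x_n'\}$, but now choose the loss to be \emph{linear} rather than quadratic: something like $\calL(w;\calX) = \frac{1}{n}\sum_i g_i \inner{\mathbf{e}_{i^*}}{w}$ where the contribution of the last datapoint makes $\calL(\cdot;\calX') = \frac{g}{n}\inner{\mathbf{e}_{i^*}}{w}$ for an appropriate $g \in [-L, 0]$ (choosing $g \approx -L$ to respect Lipschitzness), while $\calL(\cdot;\calX) \equiv 0$. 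Under $\calX$, property~1 of the AC oracle forces every update to be $\mathbf{0}_d$, so $W^\calX_T = w_0 \sim \calN(0,\sigma^2 I)$ exactly. Under $\calX'$, properties~2 and~3 guarantee that the $i^*$-coordinate of each update is strictly negative in expectation, so the $i^*$-coordinate of the iterate drifts downward over $T$ steps (until it hits $-D$ and gets clipped). The separating event will be roughly $\{ \{W_T\}_{i^*} \text{ is very negative} \}$ — an event that $W^{\calX'}_T$ hits with constant probability but $W^\calX_T \sim \calN(0,\sigma^2)$ essentially never hits.

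The key quantitative steps, in order: (i) Track the $i^*$-coordinate of the unprojected walk $S_T = \sum_{t=1}^T \{U_t\}_{i^*}$ where $U_t \sim \calO(\calL(\cdot;\calX'), W_{t-1})$; by property~3, $\Ex{S_T} \le T \cdot (g/n) = -\eta L T / n$ roughly (after folding in the stepsize $\eta$), so $\Ex{-S_T} \gtrsim \eta L T/n$. (ii) Use property~4 (the $C_s$ anti-concentration bound) together with the Paley--Zygmund inequality (Lemma~\ref{lem:pz}) to show that $-S_T \ge \frac{1}{2}\Ex{-S_T}$ with probability at least $\frac{1}{4C_s}$ — this is where $C_s$ enters the final $\delta$. (iii) Relate the projected iterate to the walk: since projection onto $[-D,D]^d$ only moves coordinates toward the box and every increment in coordinate $i^*$ is negative (property~2), the true $i^*$-coordinate satisfies $\{W_T\}_{i^*} \le \max\{-D, \eta S_T + \{w_0\}_{i^*}\}$, i.e. the projected walk is \emph{below} the unprojected one. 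So with probability $\ge \frac{1}{4C_s}$ (times a factor for $\{w_0\}_{i^*} \le 0$, which is $1/2$), the event $E = \{\{W_T\}_{i^*} \le -\min\{D, \frac{\eta L T}{2n}\}\}$ — call the threshold $-\tau$ — occurs. Wait, I need the threshold to actually be reached, so $\tau = \min\{D, \frac{\eta L T}{4n}\}$ or similar; the $T^{2/3}$ and $T^{4/3}$ in the statement suggest the optimal choice balances $\eta L T / n$ against something, so I'd actually set the threshold at $\tau \asymp \min\{D, \eta L T^{2/3}/n\}$ and correspondingly only need the walk to deviate by a $T^{-1/3}$ fraction of its mean, which by Paley--Zygmund costs a factor $(1 - T^{-1/3})^2 \approx 1$ in probability but lets us push $\delta$ down.

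Finally, (iv) bound the other side: $\bb{P}[W^\calX_T \in E] = \bb{P}_{g \sim \calN(0,\sigma^2)}[g \le -\tau] \le \frac{\sigma}{\tau\sqrt{2\pi}} e^{-\tau^2/2\sigma^2}$ by Lemma~\ref{lem:gauconc}. Then $(\varepsilon,\delta)$-DP would require $\bb{P}[W^{\calX'}_T \in E] \le e^\varepsilon \bb{P}[W^\calX_T \in E] + \delta$, i.e. $\frac{c}{C_s} \le e^\varepsilon \cdot \frac{\sigma}{\tau\sqrt{2\pi}} e^{-\tau^2/2\sigma^2} + \delta$; choosing $\delta$ below half the left side and solving for $\varepsilon$ gives the stated bound, with the $\min\{\eta^2 L^2 T^{4/3}/(8n^2\sigma^2), D^2/(2\sigma^2)\}$ term coming from $\tau^2/2\sigma^2$ under the two cases of the $\min$ defining $\tau$, and the $\ln(\sqrt{2\pi}L\eta/(64 C_s n \sigma))$ term from the prefactor $\tau/\sigma$ together with the constants.

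I expect the \textbf{main obstacle} to be step (iii): carefully arguing that projection onto the hypercube does not destroy the downward drift. One must show that clipping the $i^*$-coordinate at $-D$ (or at $+D$, which never binds here since increments are negative and $w_0$ is centered) keeps the projected iterate pointwise below the free walk, and handle the event $\{w_0\}_{i^*} > 0$ cleanly — the cleanest route is to condition on $\{w_0\}_{i^*} \le 0$ (probability $1/2$) and note that from then on the $i^*$-coordinate only decreases, so projection in that coordinate is the identity until the walk reaches $-D$, after which it stays $\le -D$; either way $E$ is realized. The second delicate point is optimizing the threshold $\tau$ to produce exactly the $T^{2/3}/T^{4/3}$ exponents — getting Paley--Zygmund's $(1-\alpha)^2$ factor to interact correctly with the $1/C_s$ variance bound so that $\delta$ can be taken as small as $\frac{1}{16C_s}\max\{T^{-2/3}, \eta L/(2nD)\}$.
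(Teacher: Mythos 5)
Your proposal follows essentially the same route as the paper's proof: a linear loss $\calL(w;\calX') = \tfrac{-L}{n}\inner{w}{\mathbf{e}_{i^*}}$ that vanishes on $\calX$, a half-space separating set in coordinate $i^*$ with threshold $\min\{D,\,\Theta(\eta L T^{2/3}/n)\}$, Paley--Zygmund applied with $\alpha \sim T^{-1/3}$ via property~4 to get a $\Theta(1/(C_s T^{2/3}))$ lower bound on one side, and the Gaussian tail bound (Lemma~\ref{lem:gauconc}) on the other. The only real slip is a consistent sign inversion: with $g\le 0$ the AC oracle gives $\{U_t\}_{i^*}<0$, and the update $w\gets w-\eta U_t$ therefore drives $\{w\}_{i^*}$ \emph{up}, not down; the separating event should be $\{\{W_T\}_{i^*}\ge \tau\}$ and the helpful conditioning is $\{w_0\}_{i^*}\ge 0$, not $\le 0$. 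Correspondingly, in your step~(iii) the monotonicity you want is that projection onto the box cannot \emph{decrease} $\{W_T\}_{i^*}$ below the free sum $\{w_0\}_{i^*}-\eta\sum_t\{U_t\}_{i^*}$ as long as the $+D$ face is never hit (which is exactly the event you are bounding), giving $\{W_T\}_{i^*}\ge \{w_0\}_{i^*}-\eta\sum_t\{U_t\}_{i^*}$ on the complement of the target set --- this is precisely how the paper handles the projection and closes the argument.
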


\begin{proof}
    Consider the following loss function for a database $\calX = \{x_1, x_2, ..., x_n\}$
    \begin{align*}
        \mathcal{L}(w; \calX) = \frac{-1}{n} \inner{w}{\sum_{i = 1}^n x_i},
    \end{align*}
    where $\twonorm{x_i} \leq L$ for all $i \in [n]$ and $x_i \in \Real^d$ with $w \in [-D, D]^d$.
    Consider the neighbouring databases $\chi = \{x_1, x_2, ..., x_{n-1}, x_n\}$ and $\chi' = \{x_1, x_2, ..., x_{n-1}, x_n'\}$ differing only at the last entry. Let $x_1, ..., x_{n-1}, x_n$ to be $\mathbf{0}_d$. Given the index $i^{*}$ defined in Defintion~\ref{def:badoracle} for $C_s$-\acs~oracles, let $x_n' = L{\mathbf{e}_{i^*}}$. With this construction, we have $\calL(w; \calX) = 0$ and $\calL(w; \calX') = \frac{-L}{n}\inner{w}{{\mathbf{e}_{i^*}}}$. Let $W^{\calX}_t$ and $W^{\calX'}_t$ be the $t^{th}$ iterate of Algorithm~\ref{alg:proj_zo_sgd} is run on $\calL(\cdot, \calX)$ and $\calL(\cdot, \calX')$, respectively. 
    
    To show that the algorithm is differentially private, we need to define a measurable set $S$ so that Definition~\ref{def:dp} fails. Let
    \begin{align*}
        S = \left\{w \in \Real^d: \{w\}_{i^*} \geq \min\left\{\frac{\eta L}{2n}T^{2/3}, D\right\}  \right\}.
    \end{align*}

     We will show that for this set S, $\bb{P}\left[W^{\calX'}_T \in S\right] \geq e^{\varepsilon_0}\bb{P}\left[W^{\calX}_T \in S\right] + \delta_0$ for some $\varepsilon_0$ and $\delta_0$.
    
    \textbf{Note:} The two adjacent functions have been designed in a manner such that in one case, the iterate stays near origin with high probability. In the other case, it shifts away from its original point with a good probability, enough to ensure separation between the high probability regions of the two iterates.
    
    Based on our definition of S, we divide our analysis into two cases:
\begin{itemize}
    \item \textbf{Case 1:} $T \leq \paren{\frac{2nD}{\eta L}}^{3/2}$ or equivalently $\frac{\eta L}{2n}T^{2/3} \leq D$\\
        \textbf{Computing $\bb{P}\left[W^{\calX'}_T \in S\right]$} Since we have assumed the constraint space to be a hypercube, projection corresponds to coordinate wise clipping. Hence, it would suffice to analyze the setting for one coordinate without affecting the other coordinates and vice versa. Due to the second property in Definition~\ref{def:badoracle}, we have that $\{W^{\calX'}_t\}_{i^*}$ is monotonic, i.e. $\{W^{\calX'}_{t-1}\}_{i^*} \leq \{W^{\calX'}_{t}\}_{i^*}$ for $t \geq 2$. Notice that if $W^{\calX'}_T \notin S$ then $\{W^{\calX'}_T\}_{i^*} < D$ which implies that no projection occurred in the ${i^*}^{th}$ coordinate on the right side of the interval $[-D, D]$ in $T$ iterations. Hence, $\{W^{\calX'}_T\}_{i^*} =\max\left\{-D, \{W^{\calX'}_0\}_{i^*} - \eta \{U^{\calX'}_1\}_{i^*}\right\} - \eta \sum_{j=2}^{T}\{U^{\calX'}_j\}_{i^*}$ where $U^{\calX'}_j \sim \calO\left(\mathcal{L}(\cdot;\calX'), W^{\calX'}_{j-1}\right)$. Hence, 
        \begin{align*}
                \bb{P}\left[W^{\calX'}_T \in S\right] &= 1 -\bb{P}\left[\max\left\{-D, \{W^{\calX'}_0\}_{i^*} - \eta \{U^{\calX'}_1\}_{i^*}\right\} - \eta \sum_{j=2}^{T}\{U^{\calX'}_j\}_{i^*} < \frac{\eta L}{2n}T^{2/3}\right] \\
                &\geq 1 - \bb{P}\left[\{W^{\calX'}_0\}_{i^*} - \eta \sum_{j=1}^{T}\{U^{\calX'}_j\}_{i^*} < \frac{\eta L}{2n}T^{2/3}\right]\\
                &= \bb{P}\left[\{W^{\calX'}_0\}_{i^*} - \eta \sum_{j=1}^{T}\{U^{\calX'}_j\}_{i^*} \geq \frac{\eta L}{2n}T^{2/3}\right], \tag{a}
        \end{align*}
        where the inequality is due to the fact $\{W^{\calX'}_T\}_{i^*} \geq \{W^{\calX'}_0\}_{i^*} - \eta \sum_{j=1}^{T}\{U^{\calX'}_j\}_{i^*}$. Next, we obtain a lower bound on $\bb{P}\left[\{W^{\calX'}_0\}_{i^*} - \eta \sum_{j=1}^{T}\{U^{\calX'}_j\}_{i^*} \geq \frac{\eta L}{2n}T^{2/3} \right]$. Take $Z = -\sum_{j=1}^{T}\{U^{\calX'}_j\}_{i^*}$. We have
        \begin{align*}
            \bb{P}\left[\{W^{\calX'}_0\}_{i^*} + \eta Z \geq \frac{\eta L}{2n}T^{2/3} \right] &\geq \bb{P}\left[\eta Z \geq \frac{\eta L}{2n}T^{2/3} - \{W^{\calX'}_0\}_{i^*} \bigg\vert \{W^{\calX'}_0\}_{i^*} \geq 0\right]\bb{P}\left[\{W^{\calX'}_0\}_{i^*} \geq 0\right]\\
            &\geq \frac{1}{2}\bb{P}\left[Z \geq \frac{L}{2n}T^{2/3} \right] \\
            &\geq \frac{1}{2}\bb{P}\left[Z \geq \frac{1}{2T^{1/3}}\frac{LT}{n} \right]\\
            &\geq \frac{1}{2}\bb{P}\left[Z \geq \frac{1}{2T^{1/3}}\Ex{Z} \right]. \tag{1}
        \end{align*}
        In the first inequality, we used the fact that $\{W^{\calX'}_0\}_{i^*} \sim \calN(0, \sigma^2)$ and therefore $\bb{P}\left[\{W^{\calX'}_0\}_{i^*} \geq 0\right] = \frac{1}{2}$. In the fourth inequality, we used the third property of the $C_s$-AC oracle in Definition~\ref{def:badoracle} which (with linearity of expectation) implies that $\Ex{Z} = \Ex{-\sum_{j=1}^T \{U_j\}_{i^*}} \leq -\sum_{j=1}^T\{\nabla \mathcal{L}(W_j^{\chi'};\chi')\}_{i^*} $ and the fact that $\{\nabla \mathcal{L}(w; \chi')\}_{i^*} = -\frac{L}{n}$ for all $w \in \Real^d$, by our construction. 

        Using, the second property of $C_s$-AC oracle, we have  $Z = -\sum_{j=1}^T \{U_j\}_{i^*} \geq 0$. Thus, applying Paley-Zygmund (Lemma~\ref{lem:pz}) on random variable Z for $\alpha = \frac{1}{2T^{1/3}}$, we get
        \begin{align*}
            \bb{P}\left[Z \geq \frac{1}{2T^{1/3}}\Ex{Z} \right] & \geq \paren{1 - \frac{1}{2T^{1/3}}}^2\frac{(\Ex{Z})^2}{\Ex{Z^2}} \\
            &\geq \frac{1}{4C_sT^{2/3}}. \tag{2}
        \end{align*}
      In the second inequality, we use the fourth property of $C_s$-AC oracles as defined in Definition~\ref{def:badoracle} to get $\Ex{\paren{\sum_{j=1}^N \{U_j\}_{i^*}}^2} \leq C_s \Ex{\sum_{j=1}^N \{U_j\}_{i^*}}^2$ which implies that $\Ex{Z^2} \leq C_s (\Ex{Z})^2$ and $T \geq 1$. Combining inequalities 1 and 2, we get that,
      \begin{align*}
          \bb{P}\left[\{W^{\calX'}_0\}_{i^*} - \eta \sum_{j=1}^{T}\{U^{\calX'}_j\}_{i^*} \geq \frac{\eta L}{2n}T^{2/3} \right] \geq \frac{1}{8C_sT^{2/3}}. \tag{b}
      \end{align*}
      Combining inequalities (a) and (b), we get
      \begin{align*}
          \bb{P}\left[W^{\calX'}_T \in S\right] \geq \frac{1}{8C_sT^{2/3}}. \tag{C1}
      \end{align*}
      So far, we computed a lower bound on $\bb{P}\left[W^{\calX'}_T \in S\right]$. Next, we compute an upper bound on $\bb{P}\left[W^{\calX}_T \in S\right]$.
        \paragraph{Computing $\bb{P}\left[W^{\calX}_T \in S\right]$} Using the first property of the $C_s$-AC oracle in Definition~\ref{def:badoracle}, we have that $W^{\calX}_T = W_0 \sim \calN(0, I_d)$. Since the projection operator simply projects any value outside the interval to the edge, it does not change the inverse CDF on points which are within the interval. Then, we get that 
        \begin{align*}
            \bb{P}\left[W^{\calX}_T \in S\right] &= \bb{P}\left[\{w_0\}_{i^*} \geq \frac{\eta L}{2n}T^{2/3} \right] \\
            &\leq \frac{2n\sigma}{\sqrt{2\pi} \eta L T^{2/3}}e^{-\frac{\eta^2L^2T^{4/3}}{8n^2\sigma^2}}, \tag{C2}
        \end{align*}
        
        where the inequality is due to Gaussian Concenteration (Lemma~\ref{lem:gauconc}) . Using inequalities (C1) and (C2), we get that 
        \begin{align*}
            \frac{\bb{P}\left[W^{\calX'}_t \in S\right] - \frac{1}{16C_sT^{2/3}}}{\bb{P}\left[W^{\calX}_T \in S\right]} \geq \frac{\sqrt{2\pi}\eta L}{32n\sigma C_s}e^{\frac{\eta^2L^2T^{4/3}}{8n^2\sigma^2}}.
        \end{align*}
        Therefore,
        \begin{align*}
            \bb{P}\left[W^{\calX'}_t \in S\right] \geq e^{\frac{\eta^2L^2T^{4/3}}{8n^2\sigma^2} + \ln\paren{\frac{\sqrt{2\pi}L\eta}{32C_sn\sigma}}}\bb{P}\left[W^{\calX}_T \in S\right] + \frac{1}{16C_sT^{2/3}}.
        \end{align*}

    \item \textbf{Case 2:} $T \geq \paren{\frac{2nD}{\eta L}}^{3/2}$ or equivalently $\frac{\eta L}{2n}T^{2/3} \geq D$
    
    The approach to this case is the same Case 1. It only differs in the computation of the constants and dependence on the respective variables.
    
    \textbf{Computing $\bb{P}\left[W^{\calX'}_T \in S\right]$} Using the same argument as that in case 1, we get that
        \begin{align*}
            \bb{P}\left[W^{\calX'}_T \in S\right] &\geq \bb{P}\left[\{W^{\calX'}_0\}_{i^*} - \eta \sum_{j=1}^{T}\{U^{\calX'}_j\}_{i^*} \geq D\right]  \tag{d}
        \end{align*}
        Now, to obtain the lower bound on $\bb{P}\left[\{W^{\calX'}_0\}_{i^*} - \eta \sum_{j=1}^{T}\{U^{\calX'}_j\}_{i^*} \geq D \right]$, we use the same series of steps as those in Case 1.
        \begin{align*}
            \bb{P}\left[\{W^{\calX'}_0\}_{i^*} - \eta \sum_{j=1}^{T}\{U^{\calX'}_j\}_{i^*} \geq D \right] &\geq \frac{1}{2}\bb{P}\left[-\sum_{j=1}^{T}\{U^{\calX'}_j\}_{i^*} \geq \frac{D}{\eta} \right] \\
            &\geq \frac{1}{2}\bb{P}\left[-\sum_{j=1}^{T}\{U^{\calX'}_j\}_{i^*} \geq \frac{Dn}{\eta LT}\frac{LT}{n} \right] \\
            & \geq \frac{1}{2}\bb{P}\left[-\sum_{j=1}^{T}\{U^{\calX'}_j\}_{i^*} \geq \frac{Dn}{\eta LT}\Ex{-\sum_{j=1}^{T}\{U^{\calX'}_j\}_{i^*}} \right] \\
            & \geq \frac{1}{2}\paren{1 - \frac{Dn}{\eta LT}}^2\frac{1}{C_s} \\
            &\geq \frac{1}{2}\paren{1 - \frac{1}{2}\sqrt{\frac{\eta L}{2Dn}}}^2\frac{1}{C_s} \\
            &\geq \frac{1}{16}\frac{\eta L}{Dn}\frac{1}{C_s}. \tag{e}
        \end{align*}
        
        In the first equality, we simply used the fact that $\{W^{\calX'}_0\}_{i^*} \sim \calN(0, \sigma^2)$ and therefore $\bb{P}\left[\{W^{\calX'}_0\}_{i^*} \geq 0\right] = \frac{1}{2}$. In the third inequality, we used the third property of the $C_s$-AC oracle where $\{\nabla \mathcal{L}(\cdot; \chi')\}_{i^*} T \geq \Ex{\sum_{j=1}^T \{U_j\}_{i^*}}$ and $\{\nabla \mathcal{L}(\cdot; \chi')\}_{i^*} = -\frac{L}{n}$, by our construction. In the fourth inequality, using the second property of the $C_s$-AC oracle, we get $-\sum_{j=1}^T \{U_j\}_{i^*} \geq 0$, $T \geq 1$. Hence, we apply Paley-Zygmund on $-\sum_{j=1}^N \{U_j\}_{i^*}$ and then use the fourth property of the $C_s$-AC oracle implying that $\Ex{\paren{\sum_{j=1}^N \{U_j\}_{i^*}}^2} \leq C_s \Ex{\sum_{j=1}^N \{U_j\}_{i^*}}^2$. In the fifth inequality, we use the fact that $T \geq \paren{\frac{2nD}{\eta L}}^{3/2}$ and in the sixth inequality, we use the fact that $\frac{1}{2}\sqrt{\frac{\eta L}{2Dn}} \leq \frac{1}{2}$. Thus, combining inequalities (d) and (e), we get that
        \begin{align*}
            \bb{P}\left[W^{\calX'}_t \notin S\right] \geq \frac{1}{16}\frac{\eta L}{Dn}\frac{1}{C_s}  \tag{F1}
        \end{align*}
        
        \paragraph{Computing $\bb{P}\left[W^{\calX}_T \in S\right]$} This argument follows exactly from the first case. We get that 
        \begin{align*}
            \bb{P}\left[W^{\calX}_T \in S\right] = \bb{P}\left[\{w_0\}_{i^*} \geq D \right] \leq \frac{\sigma}{\sqrt{2\pi} D}e^{-\frac{D^2}{2\sigma^2}} \tag{F1}
        \end{align*}
        Hence, using inequalities F1 and F2, we get
        \begin{align*}\frac{\bb{P}\left[W^{\calX'}_t \in S\right] - \frac{\eta L}{32C_sDn}}{\bb{P}\left[W^{\calX}_T \in S\right]} \geq \frac{\sqrt{2\pi}\eta L}{32C_sn\sigma}e^{\frac{D^2}{2\sigma^2}} 
        \end{align*}
        
        Therefore,
        \begin{align*}
        \bb{P}\left[W^{\calX'}_t \in S\right] \geq e^{{\frac{D^2}{2\sigma^2}} + \ln\paren{\frac{\sqrt{2\pi}\eta L}{32C_sn\sigma}}}\bb{P}\left[W^{\calX}_T \in S\right] + \frac{Dn}{32C_s\eta L}
        \end{align*}
\end{itemize}

Combining the above two cases proves our claim.
\end{proof}

\section{Discussion of Oracles}
\label{app:baddiscussion}
\begin{lemma}
    SPSA, FD, and estimator defined in Definition~\ref{duchi:oracle} are 3 \acs.
\end{lemma}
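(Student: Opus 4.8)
The plan is to fix $i^* = 1$ (by the rotational symmetry of the isotropic Gaussian any single coordinate works equally well) and to compute directly what each of the three estimators returns when applied to a function $h \in \calF$, i.e.\ $h(w) = g\,w_1$ for some $g \in [-L,0]$, so that $\nabla h(\overline{w}) = g\,\mathbf{e}_1$ for every $\overline{w}$. The crucial simplification is that the difference form of all three estimators cancels the constant part of $h$ and the part of $h$ that is linear in $w$, leaving only the term linear in the perturbation. Writing $Z, Z_1, Z_2 \sim \calN(0,I_d)$, a one-line calculation gives, for every $\overline{w} \in \Real^d$,
\begin{align*}
    SPSA_\xi(h,\overline{w}) = FD_\xi(h,\overline{w}) = g\,\{Z\}_1\, Z, \qquad G_{\mu_1,\mu_2}(h,\overline{w}) = g\,\{Z_2\}_1\, Z_2,
\end{align*}
so in all three cases the $i^*$-th coordinate of the output equals $g\,\zeta$ with $\zeta = \{Z\}_1^2$ (resp.\ $\{Z_2\}_1^2$) a single $\chi^2_1$ random variable, whose law depends neither on $\overline{w}$ nor on the smoothing radii $\xi,\mu_1,\mu_2$.

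Given this reduction, properties 1--3 of Definition~\ref{def:badoracle} are immediate. When $g=0$ the whole output is $g$ times a random vector, hence equals $\mathbf{0}_d$ almost surely, which is property 1. When $g<0$ we have $\{U\}_1 = g\zeta < 0$ almost surely because $\zeta>0$ almost surely, which is property 2. And $\Ex{\{U\}_1} = g\,\Ex{\zeta} = g = \{\nabla h(\overline{w})\}_1$, so property 3 holds, in fact with equality.

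For property 4 I would use precisely the fact that the $i^*$-th coordinate of the estimator is independent of the query point, together with the independence of the $U_j$'s: then $\sum_{j=1}^N \{U_j\}_1 = g\sum_{j=1}^N \zeta_j$ with $\zeta_j$ i.i.d.\ $\chi^2_1$, so $\sum_{j=1}^N \zeta_j \sim \chi^2_N$. Using $\Ex{\paren{\sum_j \zeta_j}^2} = N^2 + 2N$ and $\Ex{\sum_j \zeta_j}^2 = N^2$, and multiplying through by $g^2 \ge 0$,
\begin{align*}
    \Ex{\paren{\textstyle\sum_{j=1}^N \{U_j\}_1}^2} = g^2\paren{N^2 + 2N} \leq 3\,g^2 N^2 = 3\,\Ex{\textstyle\sum_{j=1}^N \{U_j\}_1}^2
\end{align*}
for every $N \ge 1$, which gives property 4 with $C_s = 3$.

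There is no real obstacle here; the only points that need care are treating the degenerate case $g=0$ separately, and observing that the constant $3$ is dictated by (and tight for) the ratio of the second moment of a $\chi^2_1$ variable to the square of its mean — the bound $1 + 2/N \le 3$ is attained at $N=1$ — so $3$ is the best constant obtainable for this family of estimators. I would also note that the same computation makes transparent why the single-point estimator does not fit this framework: there the term $h(\overline{w})$ is not cancelled, so the $i^*$-th coordinate acquires a contribution proportional to $\overline{w}_1\{Z\}_1$, which destroys both the sign-definiteness and the clean $\chi^2$ structure used above.
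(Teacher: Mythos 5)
Your proof is correct and follows essentially the same route as the paper: fix $i^*=1$, observe that the difference structure reduces each estimator on $h(w)=gw_1$ to $g\{Z\}_1 Z$ so the first coordinate is $g$ times a $\chi^2_1$ variable, and then verify properties 1--4 via the moments of $\chi^2_N$, obtaining $C_s=3$ from $N^2+2N\le 3N^2$ for $N\ge 1$. Your added remarks (tightness at $N=1$, and why the uncancelled $h(\overline{w})$ term breaks the argument for the single-point estimator) are accurate observations the paper makes elsewhere but not in this proof.
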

\begin{proof}
    Take $i^* = 1$. Consider any $f \in \calF$. Then, the expression of $f = \inner{ge_1}{w}$, which means that $\abs{\{\nabla f\}_{i^*}} = g$. Then, for ${Z_{SPSA}}, {Z_{FD}}, {Z_{G_1}} {Z_{G_2}} \sim \calN(0, I_d)$,
    \begin{align*}
        SPSA(f, w) &= \frac{\inner{ge_1}{w + \mu {Z_{SPSA}}} - \inner{ge_1}{w - \mu {Z_{SPSA}}}}{2\mu}{Z_{SPSA}} \\
        &= g\{{Z_{SPSA}}\}_1{Z_{SPSA}} \\
        FD(f, w) &= \frac{\inner{ge_1}{w + \mu {Z_{FD}}} - \inner{ge_1}{w}}{\mu}{Z_{FD}} \\
        &= g\{{Z_{FD}}\}_1{Z_{FD}}\\
        G_{\mu_1, \mu_2}(f, w) &= \frac{\inner{ge_1}{w + \mu_1 {Z_{G_1}} + \mu_2 {Z_{G_2}}} - \inner{ge_1}{w + \mu_1 {Z_{G_1}}}}{\mu_2}{Z_{G_2}}\\
        &= g\{{Z_{G_2}}\}_1{Z_{G_2}}
    \end{align*}
    Since ${Z_{SPSA}}, {Z_{FD}}, {Z_{G_2}}$ are i.i.d. random variables, it implies that the given estimators follow the same distribution. Take $\c{E}$ to be any one of the oracles, and for ${Z} \sim \calN(0, I_d)$, we have that $\c{E}(f, w) = g\{{Z}\}_1{Z}$. Thus, we have that $\{\c{E}(f, w)\}_1 = g\{{Z}\}_1^2$ and $\{{Z}\}_1^2 \sim \chisq(1)$. $\{U^{(f)}\}_1 = g{V}$ where ${V} \sim \chisq(1)$. Now, we verify the properties
    \begin{enumerate}
        \item Observe that when $\abs{\{\nabla f\}_{i^*}} = g = 0$ which implies that $\c{E}(f,w) = \mathbf{0}_d$, satisfying property 1.
        \item For the second property, ${V} \geq 0$ which implies that $gV \leq 0$ for all .
        \item $\Ex{V} = 1$ which implies that $\Ex{\{U^{(f)}\}_{i^*}} = g\Ex{V} = g = \abs{\{\nabla f\}_{i^*}}$.
        \item $R = \sum_{j=1}^N V_j \sim \chisq(N)$. Hence, we know that $\Ex{R} = N$ and $Var[R] = 2N$, which implies that $\Ex{R^2} = N^2 + 2N$. We also have that $\sum_{j=1}^N \{U^{(f)}_j\}_{i^*} = g\sum_{j=1}^N V_j$, which implies that $\Ex{\paren{\sum_{j=1}^N \{U^{(f)}_j\}_{i^*}}^2} = g^2(N^2 + 2N)$ and $\Ex{\paren{\sum_{j=1}^N \{U^{(f)}_j\}_{i^*}}}^2 = g^2N^2$. Using $N \geq 1$ gives the value of $C_s = 3$ proving our given claim.
    \end{enumerate}
\end{proof}

\begin{lemma}{(Restated)}
    If an oracle $\c{E}$ is $C_s$-\acs~ then $M_m^{\c{E}}$ is $C_s$-\acs.
\end{lemma}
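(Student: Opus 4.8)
The plan is to verify the four defining properties of a $C_s$-\acs~oracle for $M_m^{\c{E}}$, reusing the index $i^*$ witnessed by $\c{E}$ and the fixed function class $\c{F}=\{\inner{g\mathbf{e}_{i^*}}{w}: -L\le g\le 0\}$. Fix $h\in\c{F}$ and $\overline{w}\in\Real^d$, and write $M_m^{\c{E}}(h,\overline w)=\frac1m\sum_{i=1}^m U_i^{(h,\overline w)}$ where $U_i^{(h,\overline w)}\sim\c{E}(h,\overline w)$ i.i.d. The first three properties then follow coordinatewise from the corresponding properties of $\c{E}$: if $\{\nabla h(\overline w)\}_{i^*}=0$ then each $U_i^{(h,\overline w)}=\mathbf{0}_d$ w.p.\ $1$ so their average is $\mathbf{0}_d$ w.p.\ $1$; if $\{\nabla h(\overline w)\}_{i^*}\neq 0$ then each $\{U_i^{(h,\overline w)}\}_{i^*}<0$ w.p.\ $1$, so the average of $m$ strictly negative numbers is strictly negative w.p.\ $1$; and by linearity of expectation $\E[\{M_m^{\c{E}}(h,\overline w)\}_{i^*}]=\frac1m\sum_{i=1}^m\E[\{U_i^{(h,\overline w)}\}_{i^*}]\le\{\nabla h(\overline w)\}_{i^*}$, since each term is at most $\{\nabla h(\overline w)\}_{i^*}$.

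The interesting step is property 4. Given $\{w_1,\dots,w_N\}\subset\Real^d$ and $V_j\sim M_m^{\c{E}}(h,w_j)$ independent across $j$, expand $V_j=\frac1m\sum_{i=1}^m U_{i,j}$ with $U_{i,j}\sim\c{E}(h,w_j)$ all independent over $(i,j)$. Then $\sum_{j=1}^N\{V_j\}_{i^*}=\frac1m\sum_{j=1}^N\sum_{i=1}^m\{U_{i,j}\}_{i^*}$, which is exactly $\frac1m$ times a sum of $mN$ independent samples of $\c{E}$ at points $w_1$ ($m$ copies), $w_2$ ($m$ copies), and so on. Applying property 4 of $\c{E}$ to this collection of $mN$ points (reindexing the double sum as a single sum over $[mN]$) gives $\E\big[(\sum_{j}\sum_i\{U_{i,j}\}_{i^*})^2\big]\le C_s\,\E\big[\sum_j\sum_i\{U_{i,j}\}_{i^*}\big]^2$, and dividing both sides by $m^2$ yields $\E[(\sum_{j=1}^N\{V_j\}_{i^*})^2]\le C_s\,\E[\sum_{j=1}^N\{V_j\}_{i^*}]^2$, which is property 4 for $M_m^{\c{E}}$ with the same constant $C_s$.

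The main obstacle — really the only subtlety — is making sure property 4 of $\c{E}$ is invoked with the correct multiset of query points: the mean extension introduces $m$ i.i.d.\ copies \emph{per} outer index $j$, so one must treat these as $mN$ points (with repetition allowed, which Definition~\ref{def:badoracle} permits since it quantifies over arbitrary sets, and repeated points only mean the independent samples share a distribution) rather than $N$. Once the double sum is flattened, the $\frac1m$ factors cancel identically on both sides of the inequality, so no loss in $C_s$ occurs. The claimed sketch in the excerpt mentions "reordering of RV and Young's inequality" as an alternative route to property 4 — one could instead split $\E[(\sum_j\{V_j\}_{i^*})^2]$ into diagonal terms $\sum_j\E[\{V_j\}_{i^*}^2]$ and cross terms $\sum_{j\neq j'}\E[\{V_j\}_{i^*}]\E[\{V_{j'}\}_{i^*}]$, bound each diagonal term via property 4 of $\c{E}$ applied to the single point $w_j$ (with its $m$ copies), and bound the cross terms using that the expectations are all of the same sign (nonpositive by property 2) — but the flattening argument above is cleaner and avoids Young's inequality entirely. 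I would present the flattening argument.
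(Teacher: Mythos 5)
Your proof is correct, and the route you take for property~4 is genuinely different from (and cleaner than) the paper's. The paper first swaps the order of summation to write $\sum_{k}\{V_k\}_{i^*}=\frac1m\sum_{j=1}^m S_j$ with $S_j=\sum_{k=1}^N\{U_j^{(f,w_k)}\}_{i^*}$ i.i.d.\ over $j$, then applies a power-mean/Jensen-type inequality (the paper calls it ``Young's inequality'') to bound $\Ex{\paren{\frac1m\sum_j S_j}^2}\le \Ex{\frac1m\sum_j S_j^2}$, and only afterward invokes property~4 of $\calE$ on each $S_j$ (with the original $N$ points) to bound $\Ex{S_j^2}\le C_s\Ex{S_j}^2$. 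Your flattening argument instead treats the double sum as a single sum over $mN$ independent draws and applies property~4 of $\calE$ once, so that the $\frac1{m^2}$ factors cancel on both sides and no additional convexity inequality is needed. You correctly flag the one subtlety: applying property~4 to the flattened collection requires reading Definition~\ref{def:badoracle} as permitting repeated query points (since $w_k$ now appears $m$ times), which the definition supports because the samples are drawn independently in any case. Both arguments yield the same constant $C_s$; yours is shorter and, by not appealing to Jensen, makes the cancellation of the $m$ factors entirely structural. The alternative diagonal/cross-term decomposition you mention in passing also works, but the flattening argument you present is the most economical of the three.
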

\begin{proof}
    If $\calE$ is $C_s$-AC, then there exists an $i^* \in [N]$ which satisfies the properties mentioned in Definition~\ref{def:badoracle}. Using definition of mean extensions of estimators, we have that for $f \in \calF$ (as defined in Definition~\ref{def:badoracle})
    \begin{align*}
        M^\calE_m(f, w) = \frac{1}{m} \sum_{j=1}^m U_j^{(f,w)},
    \end{align*}
    where $U_j^{(f,w)}$ is drawn i.i.d. from $\calE(f, w)$. 

    Using property 1 of $\calE$, $\abs{\{\nabla f\}_{i^*}} = 0$ implies $U_j^{(f,w)} = \mathbf{0}_d$ for all $j \in [m]$. Thus, we get $\{M^\calE_m(f, w)\}_{i^*} = \frac{1}{m} \sum_{j=1}^m \{U_j^{(f,w)}\}_{i^*} < 0$, satisfying property 2.

    Similarly, for property 2, $\abs{\{\nabla f\}_{i^*}} \neq 0$ implies that $\{U_j^{(f,w)}\}_{i^*} < 0$ for all $j \in [m]$. Thus, we get $\{M^\calE_m(f, w)\}_{i^*} = \frac{1}{m} \sum_{j=1}^m \{U_j^{(f,w)}\}_{i^*} < 0$, satisfying property 2.
    
    For the third property, consider $\Ex{\{M^\calE_m(f, w)\}_{i^*}}$. Using linearity of expectation, we get
    \begin{align*}
        \Ex{\frac{1}{m}\sum_{j=1}^m \{U_j^{(f,w)}\}_{i^*}} &= \frac{1}{m}\sum_{j=1}^m \Ex{\{U_j^{(f,w)}\}_{i^*}} \\
        &\leq \{\nabla h\}_{i^*}.
    \end{align*}
    Hence, $M^\calE_m$ satisfies property 3. 
    
    Consider the set $\{w_{c}\}_{c=1}^{N}$. By the definition of $M^\calE_m$, we have 
    \begin{align*}
        \Ex{\paren{\frac{1}{m}\sum_{k=1}^N\sum_{j=1}^m \{U_j^{(f,w_k)}\}_{i^*}}^2} = \Ex{\paren{\frac{1}{m}\sum_{j=1}^m\sum_{k=1}^N \{U_j^{(f,w_k)}\}_{i^*}}^2}
    \end{align*}
    Let $S_j = \sum_{k=1}^N \{U_j^{(f, w_k)}\}_i$. Since $U_j^{(f, w_k)}$ are sampled i.i.d. from $\calE(f, w_k)$ for all $j \in [m]$, it implies that $S_j = \sum_{k=1}^N \{U_j^{(f, w_k)}\}_{i^*}$ is identically distributed for all $j \in [m]$. Thus, we can take $\Ex{S_u} = K_f$, and $\Ex{S_u^2} = K_s$ for all $u \in [m]$. Applying Young's inequality, we get that
    \begin{align*}
        \Ex{\paren{\frac{1}{m}\sum_{u=1}^mS_u}^2} &\leq \Ex{\frac{1}{m}\sum_{u=1}^m S_u^2} \\
        &= K_s
    \end{align*}
    Using property 4 of $\calE$, we have $K_s \leq C_s K_f^2$. Thus, substituting the original terms, we have
    \begin{align*}
        \Ex{\paren{\frac{1}{m}\sum_{k=1}^N\sum_{j=1}^m \{U_j^{(f,w_k)}\}_{i^*}}^2} \leq C_s \Ex{\sum_{k=1}^N \{U_u^{(f, w_k)}\}_{i^*}}^2  \tag{g}
    \end{align*}
    Thus, using the fact that $U_j^{(f, w_k)}$ is sampled i.i.d. for a $k \in [N]$, we see that $\Ex{\sum_{k=1}^N \{U_u^{(f, w_k)}\}_{i^*}} = \Ex{\frac{1}{m}\sum_{j=1}^m\sum_{k=1}^N \{U_j^{(f, w_k)}\}_{i^*}}$. Substituting this in the RHS of (g), we get that $M^\calE_m$ satisfies the final property.
\end{proof}

\end{document}